\newcommand{\NN}{\ensuremath{\mathbb N}}
\newcommand{\ZZ}{\ensuremath{\mathbb Z}}
\newcommand{\FF}{\ensuremath{\mathbb F}}
\newcommand{\cA}{\ensuremath{{\cal A}}}
\newcommand{\cG}{\ensuremath{{\cal G}}}
\newcommand{\tf}[1]{\left[\!\!\left[\, #1 \,\right]\!\!\right]}
\newcommand{\floor}[1]{\lfloor {#1} \rfloor}
\newcommand{\ceiling}[1]{\lceil {#1} \rceil}
\newcommand{\dfloor}[1]{ \left\lfloor #1 \right \rfloor }
\newtheorem{theorem}{Theorem}[section]
\newtheorem*{theorem*}{Theorem}
\newtheorem{lemma}[theorem]{Lemma}
\newtheorem{conjecture}[theorem]{Conjecture}
\newcommand{\seqnum}[1]{\href{https://oeis.org/#1}{\rm \underline{#1}}}
\newcommand{\p}[1]{\left\langle #1 \right\rangle}
\DeclareMathOperator{\diam}{diam}
\DeclareMathOperator{\mex}{mex}
\newcommand{\vg}{\p{\gamma_1, \gamma_2, \gamma_3, \gamma_4}}
\begin{document}
    \title{Bounds for Greedy $B_h$-sets}
    \author{Kevin O'Bryant\\
        City University of New York\\
        (College of Staten Island and
        The Graduate Center)\\
        E-mail: kevin.obryant@csi.cuny.edu}
    \date{ \today }
    \maketitle

\begin{abstract}
A set $\cA$ of nonnegative integers is called a $B_h$-set if every solution to
\(
    a_1+\dots+a_h = b_1+\dots+b_h\), where $a_i,b_i \in \cA$,
has $\{a_1,\dots,a_h\}=\{b_1,\dots,b_h\}$ (as multisets). Let $\gamma_k(h)$ be the $k$-th positive element of the greedy $B_h$-set. We give a nontrivial lower bound on $\gamma_5(h)$, and a nontrivial upper bound on $\gamma_k(h)$ for $k\ge 5$. Specifically, $\frac 18 h^4 +\frac12 h^3 \le \gamma_5(h) \le 0.467214 h^4+O(h^3)$, although we conjecture that $\gamma_5(h)=\frac13 h^4 +O(h^3)$. We show that $\gamma_k(h) \ge \frac{1}{k!} h^{k-1} + O(h^{k-2})$ for $k\ge 1$ and $\gamma_k(h) \le \alpha_k h^{k-1}+O(h^{k-2})$, where $\alpha_6 \coloneq 0.382978$, $\alpha_7 \coloneq 0.269877$, and for $k\ge 7$, $\alpha_{k+1}\coloneq \frac{1}{2^k k!} \sum_{j=0}^{k-1} \binom{k-1}j\binom kj 2^j$. This work begins with a thorough introduction and concludes with a section of open problems.
\end{abstract}

\renewcommand{\thefootnote}{}

\noindent
{2020 \emph{Mathematics Subject Classification}: Primary 11B13; Secondary 05B10.}

\noindent
{\emph{Key words and phrases}: $B_h$-set, Sidon set, Babcock set, Golomb Ruler, Greedy Algorithm.}

\renewcommand{\thefootnote}{\arabic{footnote}}
\setcounter{footnote}{0}

\section{Introduction}
A set $\cA$ of nonnegative integers is called a $B_h$-set if every solution to
\begin{equation}\label{eq:first}
  a_1+\dots+a_h = b_1+\dots+b_h, \quad a_i,b_i \in \cA
\end{equation}
has $\{a_1,\dots,a_h\}=\{b_1,\dots,b_h\}$ (as multisets).

These sets (with $h=2$) first arose in harmonic analysis~\cite{1932.Sidon} as a tool to create trigonometric polynomials with peculiar properties. Prompted by Sidon's work, Erd\H{o}s defined $B_h$-sets~\cite{1955.Stohr} and they have become a central tool and nexus of problems in combinatorial number theory, beginning with the seminal work of Erd\H{o}s~\&~Tur\'an in 1941~\cite{1941.Erdos&Turan}. They were indepedently discovered by Babcock~\cite{1953.Babcock} as a means to avoid third order intermodulation of frequencies ($B_2$-sets) and fifth-order intermodulation ($B_3$-sets). Golomb rediscovered $B_2$-sets as a means to represent graphs (the vertices are a $B_2$-set, and an edge between $a$ and $b$ is uniquely coded as $|a-b|$), and some of his questions were written up by Martin Gardner in Scientific American~\cite{1972.Gardner}. See~\cite{2005.OBryant} for an extensive bibliography.

The primary problem of interest is to give a finite $B_h$-set with many elements compared to its diameter (for $\cA\subseteq\ZZ$, we define $\diam(\cA)\coloneq 1+\max\cA-\min\cA$). A natural method to construct a $B_h$-set with small diameter is to simply be greedy. We set $\gamma_0=0,\gamma_1=1$, and thereafter set $\gamma_{k+1}(h)$ to be the smallest positive integer $x$ with the property that $\{\gamma_0,\ldots,\gamma_k(h),x\}$ is a $B_h$-set. The infinite set $\{\gamma_0,\gamma_1,\gamma_2(h),\dots\}$ is the \emph{greedy} $B_h$-set; it is the lexicographically first infinite $B_h$-set.

For reference, we give a table of $\gamma_k(h)$ (OEIS~\seqnum{A365515}). The $h=1$ row is trivial, as are the $k=0$ and $k=1$ columns. Formulas are derived for the $k=2$ and $k=3$ columns in~\cite{2023.Nathanson}, and a formula for $\gamma_4(h)$ is given in~\cite{2023.Nathanson&OBryant}.
\[
\begin{array}{r|rrrrrrrrrr}
    h & \gamma_0 &\gamma_1 & \gamma_2(h) & \gamma_3(h) & \gamma_4(h) & \gamma_5(h) & \gamma_6(h) & \gamma_7(h) & \gamma_8(h) & \gamma_9(h)\\ \hline
 1 & 0 & 1 & 2 & 3 & 4 & 5 & 6 & 7 & 8 & 9 \\
2 & 0 & 1 & 3 & 7 & 12 & 20 & 30 & 44 & 65 & 80 \\
3 & 0 & 1 & 4 & 13 & 32 & 71 & 124 & 218 & 375 & 572 \\
4 & 0 & 1 & 5 & 21 & 55 & 153 & 368 & 856 & 1424 & 2603 \\
5 & 0 & 1 & 6 & 31 & 108 & 366 & 926 & 2286 & 5733 & 12905 \\
6 & 0 & 1 & 7 & 43 & 154 & 668 & 2214 & 6876 & 16864 & 41970 \\
7 & 0 & 1 & 8 & 57 & 256 & 1153 & 4181 & 14180 & 47381 & 115267 \\
8 & 0 & 1 & 9 & 73 & 333 & 1822 & 8043 & 28296 & 102042 & 338447
\\
9 & 0 & 1 & 10 & 91 & 500 & 3119 & 13818 & 59174 & 211135 & 742330
\end{array}
\]

The greedy $B_2$-set was first considered in the literature in 1944~\cite{1944.Mian&Chowla}, and in the years since there has been no progress on understanding greedy $B_h$-sets other than the computation of more terms. The purpose of the present work is to give the first nontrivial upper bounds on $\gamma_k(h)$ for $k\ge 5$, and a non-trivial lower bound on $\gamma_5(h)$.
\begin{theorem} \label{thm:upper bound}
    Let $\alpha_5 \coloneqq 0.467214,\alpha_6 \coloneqq 0.382978,\alpha_7 \coloneqq 0.269877$, and for $k\ge 7$ set
    \[\alpha_{k+1} \coloneqq  \frac{1}{2^kk!}\sum_{j=0}^{k-1}\binom{k-1}j \binom kj 2^j = \frac{1}{2^kk!} \, {}_{2}F_1(1 - k, -k; 1; 2).\]
    Then, for all $k\ge 5$,
    \[\gamma_k(h) \le \alpha_k h^{k-1}+O_k(h^{k-2}).\]
\end{theorem}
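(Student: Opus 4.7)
The plan is to exploit the greedy characterization of $\gamma_k(h)$: by definition, $\gamma_k(h)$ is the smallest positive integer $x$ such that $G_{k-1}\cup\{x\}$ is a $B_h$-set, where I write $G_{k-1} \coloneq \{\gamma_0,\gamma_1,\gamma_2(h),\dots,\gamma_{k-1}(h)\}$. So to prove the bound it is enough to exhibit, for each sufficiently large $h$, a specific candidate $x^\star$ with $x^\star \le \alpha_k h^{k-1} + O_k(h^{k-2})$ for which $G_{k-1}\cup\{x^\star\}$ is a $B_h$-set. Since $G_{k-1}$ is itself a $B_h$-set, the forbidden set $F_k(h)$ consists exactly of the positive integers of the form
\[
  x \;=\; \frac{\sigma(T')-\sigma(T)}{j-j'},
\]
ranging over $0\le j'<j\le h$ and multisets $T,T'$ of sizes $h-j, h-j'$ drawn from $G_{k-1}$, where $\sigma$ denotes the multiset sum. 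So the task reduces to locating a small positive integer outside $F_k(h)$.

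For the base cases $k=5,6,7$, I would substitute the known closed-form expressions for $\gamma_2(h),\gamma_3(h),\gamma_4(h)$ from~\cite{2023.Nathanson} and~\cite{2023.Nathanson&OBryant} into this description of $F_k(h)$ and search for the smallest $x^\star$ avoiding it. The non-round decimal values $\alpha_5=0.467214,\alpha_6=0.382978,\alpha_7=0.269877$ suggest that in each case the natural candidate $x^\star$ depends on one or more continuous parameters (the exact shape of a linear combination of the previous $\gamma_j(h)$, say), with $\alpha_k$ arising as the optimum of an algebraic but not rational optimization problem. For $k\ge 8$ I would induct on $k$: assuming $\gamma_j(h)\le \alpha_j h^{j-1}+O(h^{j-2})$ for all $j<k$, I would design $x^\star$ of a structured arithmetic form --- for instance a prescribed linear combination of the previous greedy elements, or an element of a chosen residue class --- and bound the forbidden equations by organizing them according to the difference $d=j-j'$. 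The hypergeometric form $\alpha_{k+1}=\frac{1}{2^k k!}\,{}_2F_1(1-k,-k;1;2)$ strongly suggests that level-$d$ equations contribute with weight $2^d\binom{k-1}{d}\binom{k}{d}$ and that the aggregate collapses via a Vandermonde/Cauchy identity, which would explain why beyond $k=7$ the optimization degenerates to this closed form.

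The principal obstacle is the combinatorial accounting in this last step. The raw number of forbidden equations is of order $h^{2k-2}$, vastly exceeding the length $\Theta(h^{k-1})$ of the interval they inhabit, so a naive pigeonhole on $F_k(h)$ will not succeed. The argument must track how many distinct equations determine the same forbidden $x$, and arrange for $x^\star$ to land in a genuine gap of $F_k(h)$; extracting the optimal constant $\alpha_k$ rather than a sub-optimal one depends on exactly this accounting, which is where the Delannoy-type structure in the stated formula is expected to emerge.
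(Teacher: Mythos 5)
Your starting point (the forbidden set of ratios $(\sigma(T')-\sigma(T))/(j-j')$ and the reduction to bounding its $\mex$) is the same as the paper's Lemma~\ref{lem:mex}, but the engine that actually produces the constants $\alpha_k$ is missing, and both mechanisms you propose in its place point away from it. The paper never exhibits an explicit candidate $x^\star$ and never tracks how many equations yield the same forbidden value. Instead it bounds the $\mex$ by a threshold count: for any threshold $T$ one has $\mex F^{(k)} \le 1 + T + \lvert F^{(k)} \cap (T,\infty)\rvert$, applied with $T=\beta_k h^k$ where $\tfrac12\alpha_k\le\beta_k\le\alpha_k$. The key observation is that a forbidden value exceeding $\beta_k h^k$ forces severe structure: the denominator $r=j-j'$ must equal $1$ (for $r\ge 3$ the maximum possible value $h\gamma_k/r$ already lies below the threshold, and $r=2$ contributes only $O(h^{k-1})$ values), the top element $\gamma_k$ must occur only with positive sign, and its multiplicity must satisfy $m_k\ge(\beta_k/\alpha_k)h$ --- this is where the induction hypothesis $\gamma_k\le\alpha_k h^{k-1}+O(h^{k-2})$ enters. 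Because $m_k$ is large, the remaining multiplicities obey $\sum_{i<k}m_i\le h-m_k$, and a crude stars-and-bars count of tuples (over-counting collisions is harmless) gives $\lvert F_1(m_k,0)\rvert \le h^{k-1}\sum_{j}\binom{k-1}{j}\frac{(1-m_k/h)^{k-1-j}}{j!\,(k-1-j)!}+O(h^{k-2})$; summing over $m_k$ becomes the integral $\int_{\beta_k/\alpha_k}^{1}(1-x)^{k-1-j}\,dx$ and yields the constraint $\beta_k+\sum_{j=0}^{k-1}\binom{k-1}{j}\frac{(1-\beta_k/\alpha_k)^{k-j}}{j!\,(k-j)!}\le\alpha_{k+1}$. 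The decimals $\alpha_5,\alpha_6,\alpha_7$ come from numerically optimizing $\beta_k$ in this inequality, and for $k\ge 7$ the optimum sits at the boundary $\beta_k=\alpha_k/2$, which is exactly where the $2^j$ and the $\tfrac{1}{2^k k!}$ in the stated formula arise --- not from a weight $2^d\binom{k-1}{d}\binom{k}{d}$ attached to equations with $j-j'=d$: in the paper's sum, $j$ counts the distinct previous elements appearing on the subtracted side, and only the difference $r=1$ survives at all.

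By contrast, certifying that a specific structured $x^\star$ is unforbidden would require ruling out every one of the $\Theta_k(h^{2k-2})$ candidate equations for that particular value, a genuinely hard Diophantine problem even for $k=5$ (no formula for $\gamma_5$ is known; the paper only conjectures one), and the collision accounting you flag as the ``principal obstacle'' is never needed: the threshold argument tolerates arbitrary over-counting precisely because the forbidden values above the threshold are so few. Without the threshold-plus-high-multiplicity-of-$\gamma_k$ observation, your plan has no route to the constraint inequality above, and hence no route to the specific constants in the statement; as written it is a program rather than a proof.
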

\begin{theorem} \label{thm:g5 lower}
    The fifth positive element of the greedy $B_h$-set satisfies
    \(\displaystyle \gamma_5(h) \ge \tfrac18h^4 +\tfrac12 h^3\).
\end{theorem}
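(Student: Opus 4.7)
The strategy is to show that every integer $x$ with $\gamma_4(h)<x<\tfrac18 h^4+\tfrac12 h^3$ produces a $B_h$-violation when adjoined to $\{0,1,\gamma_2,\gamma_3,\gamma_4\}$; by the greedy definition of $\gamma_5$, this forces the stated inequality. Let $S':=\{0,1,\gamma_2,\gamma_3,\gamma_4\}$ and write $A_j$ for the set of $j$-fold sums from $S'$; since $S'$ is a $B_h$-set, $|A_j|=\binom{j+4}{4}$ for $j\le h$. The set $S'\cup\{x\}$ fails to be a $B_h$-set iff there exist $m\in\{1,\dots,h\}$, $a\in A_h$, and $b\in A_{h-m}$ with $mx=a-b$. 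I would focus on the simplest case $m=1$: using $A_h=A_{h-1}+S'$, this reduces to the condition $x-s\in D_{h-1}:=A_{h-1}-A_{h-1}$ for some $s\in S'$.

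Taking $s=\gamma_4$, the argument reduces to the following key lemma: for every $h\ge 2$, the difference set $D_{h-1}$ contains every integer in $\bigl[1,\tfrac18 h^4+\tfrac12 h^3-\gamma_4(h)\bigr]$. Using the Nathanson--O'Bryant formula $\gamma_4(h)=\tfrac12 h^3+O(h^2)$, this interval has length $\tfrac18 h^4+O(h^2)$. To prove the lemma, given $y$ in this interval I would construct two $(h-1)$-term sums $\alpha,\beta\in A_{h-1}$ with $\alpha-\beta=y$, equivalently, integer coefficients $d_4,d_3,d_2,d_1$ with $y=d_4\gamma_4+d_3\gamma_3+d_2\gamma_2+d_1$ whose sums of positive and negative parts each lie within the budget of $h-1$ elementary terms imposed by $A_{h-1}$. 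Because $|y|\le\tfrac18 h^4\approx\tfrac14 h\gamma_4$, a balanced greedy choice gives $|d_4|\le\lceil h/4\rceil$; the smaller coefficients are handled by cascading through $\gamma_3,\gamma_2,1$ and collapsing via the identities $\gamma_2=h+1$, $\gamma_3=(h-1)\gamma_2-(h-2)$, and the explicit piecewise formula for $\gamma_4$.

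The main obstacle is the tight budget: a naive balanced-digit expansion consumes roughly $\tfrac{3h}{2}$ in total coefficient magnitude, well above $h-1$. The identities above must therefore be exploited to consolidate consecutive ``digits'' uniformly across residues of $y$ modulo $\gamma_4,\gamma_3,\gamma_2$, with separate verifications for even and odd $h$ (mirroring the piecewise evaluation of $\gamma_4$) and for values of $y$ near the endpoint of the interval. The constant $\tfrac{1}{8}$ appears to emerge as $\tfrac{1}{4}\cdot\tfrac{1}{2}$---one-quarter of the $h$-term budget allotted to the leading $\gamma_4$-digit, times the leading coefficient $\tfrac{1}{2}$ of $\gamma_4/h^3$---while the secondary term $\tfrac{1}{2}h^3$ matches $\gamma_4(h)$ up to $O(h^2)$, consistent with the cleaner interpretation $\gamma_5(h)\ge\gamma_4(h)+\tfrac18 h^4+O(h^2)$.
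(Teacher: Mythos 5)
Your overall strategy is the same as the paper's: show that every integer up to $\tfrac18 h^4+\tfrac12 h^3$ is ``forbidden'' by exhibiting it (after the reduction to the $m=1$ case, which is fine and matches the paper's Lemma on $\mex F^{(k)}$) as a signed combination $d_1\cdot 1+d_2\gamma_2+d_3\gamma_3+d_4\gamma_4$ whose positive and negative parts respect the budgets coming from $h$-fold and $(h-1)$-fold sums. The problem is that your ``key lemma''---that $D_{h-1}$ contains every integer in $\bigl[1,\tfrac18 h^4+\tfrac12 h^3-\gamma_4(h)\bigr]$---is precisely the entire difficulty of the theorem, and your proposal does not prove it. You yourself observe that a naive balanced digit expansion in base $(1,\gamma_2,\gamma_3,\gamma_4)$ costs about $\tfrac{3h}{2}$ in total coefficient magnitude against a budget of roughly $h$, and the proposed remedy (``collapsing via the identities $\gamma_2=h+1$, $\gamma_3=(h-1)\gamma_2-(h-2)$, \dots, with separate verifications'') is a hope, not an argument: no explicit consolidation scheme is exhibited, no verification that it stays within budget uniformly in the residue of $y$, and no treatment of the boundary values where the leading digit changes. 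As it stands, the lemma is asserted, not established, so the proof has a genuine gap at its core.

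For comparison, the paper closes exactly this gap by constructing an explicit family $\Delta$ of coefficient vectors $\p{d_1,d_2,d_3,d_4}$ with interlocking bounds ($1\le d_4\le\lfloor(h+3)/4\rfloor$, $|d_3|\le\lfloor(h+1)/2\rfloor-d_4$, $|d_2|\le h-d_4-|d_3|$, and matching bounds on $d_1$), ordering $\Delta$ odometer-style, and verifying case by case (Appendix~B) that the image $\delta\cdot\p{1,\gamma_2,\gamma_3,\gamma_4}$ increases by at most $1$ from each element to the next; the interval containment then follows because $\Delta$ contains an element of image at least $\tfrac18h^4+\tfrac12h^3$. Note also that the cap $d_4\lesssim h/4$ is not forced by the coefficient budget (which would allow $d_4$ up to $h$) but by the requirement that the lower digits be able to bridge the jump of size $\gamma_4\approx\tfrac12h^3$ each time $d_4$ increments without leaving a gap; this is where the constant $\tfrac18$ really comes from, and it is this no-gap verification---absent from your sketch---that constitutes the proof.
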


A skeptical reader (or referee) might ask whether bounding, for example, the tenth positive element of the greedy $B_{100}$-set is an interesting problem. First, $B_h$-sets are natural objects of study, and deserve answers for aesthetic reasons. Second, non-greedy methods of constructing $B_h$-sets require working in large finite fields, and working in $\FF_{11}^{101}$ (to get an $11$-element $B_{100}$-set using Singer's construction~\cites{1938.Singer,2023.OBryant}) is not plausible, while finding a formula in terms of $h$ for the $10$-th positive element is hard but plausible. Finally, the actual motivation for this project was a desire to better understand the greedy algorithm for $B_2$-sets (which have physical real-world applications), and greedy algorithms for additive problems in general.

A similar phenomenon to that discussed here is seen in Ulam sets~\cite{Senia}. The Ulam set $U_h$ is the lexicographically least set of positive integers after $\{1,h\}$ and with the property that each element can be written as a sum of two other elements in a unique way. While $U_2$ is famously mysterious, polynomial patterns emerge when looking at the way $U_h$ starts as a function of $h$.

Cilleruelo~\cite{Cilleruelo} considered a greedy algorithm for $B_h[g]$-sets (in a $B_h$-set, the $h$-fold sums don't repeat, while in a $B_h[g]$-set the $h$-fold sums do not repeat more than $g$ times). He added a condition depending on $g$ that helps the greedy process proceed more smoothly.

\section{A Tour of the Greedy \texorpdfstring{{$\mathbf B_{\mathbf h}$}}{B h}-sets}
In this section, we repeat results that are either folklore or already in the literature. Only Lemma~\ref{lem:formulas} is used in this work, but we suspect the reader will appreciate a summary of known bounds with indications of their derivations.

We start with relatively straightforward combinatorial bounds on the size of a $B_h$-set. These are often referenced in the $B_h$-set literature, but only special cases have been made explicit. The proof of Lemma~\ref{lem:combinatorics lower} is simply the observation that the $\binom{k+h}k$ distinct possibilities for $h$-fold sums must all lie in the interval $[ha_0,ha_k]$, and for a $B_h$-set they must all be distinct.
\begin{lemma}\label{lem:combinatorics lower}
Let $\cA\coloneqq \{a_0<a_1<\dots<a_k\}$ be a $B_h$-set. Then
\begin{equation*}
    a_k\ge a_0 + \frac1h \left[\binom{k+h}{k}-1\right].
\end{equation*}
\end{lemma}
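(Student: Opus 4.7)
The plan is to execute the counting argument indicated in the sentence preceding the lemma. The key observation is that a $B_h$-set is precisely a set for which the map from unordered $h$-multisets drawn from $\cA$ to their sums is injective. So I would first count the domain: the number of $h$-element multisets from the $(k+1)$-element set $\cA = \{a_0,\ldots,a_k\}$ is the multiset coefficient $\binom{(k+1)+h-1}{h} = \binom{k+h}{k}$.

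Next, I would bound the range. Every such $h$-fold sum $a_{i_1} + \cdots + a_{i_h}$ (with indices in $\{0,1,\ldots,k\}$, repetitions allowed) is an integer in the interval $[h a_0, h a_k]$, since each summand is at least $a_0$ and at most $a_k$. The number of integers in this interval is $h a_k - h a_0 + 1$.

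Combining these observations with the $B_h$-set assumption that the sum map is injective, I would conclude
\[
    h a_k - h a_0 + 1 \ge \binom{k+h}{k},
\]
which rearranges to the claimed inequality $a_k \ge a_0 + \frac{1}{h}\left[\binom{k+h}{k}-1\right]$. There is essentially no obstacle here; the only place one must be careful is the multiset count, making sure to include the ``diagonal'' sums like $h a_i$ so that the full $\binom{k+h}{k}$ is available and not merely $\binom{k+1}{h}$ (which would be the count of $h$-subsets rather than $h$-multisets).
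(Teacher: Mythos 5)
Your argument is correct and is exactly the computation the paper intends: the $\binom{k+h}{k}$ $h$-multisets from $\cA$ have pairwise distinct sums by the $B_h$ property, all lying among the $ha_k - ha_0 + 1$ integers in $[ha_0, ha_k]$, which gives the bound. No differences from the paper's approach worth noting.
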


Given a finite $B_h$-set, there are only finitely many integers you can union into it that will \emph{not} give a $B_h$-set. Quantifying this for general $h,k$ hasn't appeared in the literature.
\begin{lemma}\label{lem:greedy forbiddens}
Let $\cA\coloneqq \{a_0<a_1<\dots<a_k\}$ be a $B_h$-set. There are at most
\[\sum_{r=1}^{h-1}\sum_{M=1}^{\infty} \binom{k}{M}\binom{k-M+h-r}{h-r}\left[\binom hM - \binom rM\right]\]
integers $f> a_k$ for which $\cA \cup \{f\}$ is not a $B_h$-set. They are all in the interval $[ha_0,ha_k]$.
\end{lemma}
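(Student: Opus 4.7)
The plan is to associate to each forbidden $f>a_k$ a canonical ``witness'' equation and count such witnesses.

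First, if $\cA \cup \{f\}$ is not a $B_h$-set while $\cA$ is, some failing $h$-sum equation must involve $f$, with $f$ appearing strictly more often on one side. Cancelling common copies of $f$ from both sides and restoring total size $h$ by padding with $a_0=0$ as needed, the witness becomes
\[rf + c_1 + \dots + c_{h-r} = b_1 + \dots + b_h, \qquad b_i, c_j \in \cA,\]
with some $r \in \{1, \dots, h-1\}$ (the case $r = h$ would give $hf \le ha_k < hf$). From $rf = \sum b_i - \sum c_j \le ha_k$ one has $f \le ha_k$, and $f \ge 0 = ha_0$ is immediate in the greedy case, establishing the interval claim.

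Second, I impose two canonical-form constraints on $(B, C)$, where $B = \{b_1, \dots, b_h\}$ and $C = \{c_1, \dots, c_{h-r}\}$ are multisets in $\cA$: (i) $B$ and $C$ share no nonzero element---if they share some $x \ne 0$, replace one copy of $x$ in each by $a_0$, which preserves both sums and the equation, so iterating clears the common nonzero part; and (ii) $B$ has fewer than $h - r$ copies of $a_0$---otherwise $\sum B \le r\,a_k$, forcing $f \le a_k$, contradicting $f > a_k$. Every forbidden $f$ thus admits a canonical witness satisfying (i) and (ii).

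Third, parameterize canonical witnesses by $r$ and by the integer $M \ge 1$ equal to the number of distinct nonzero elements of $B$. The count factorizes: $\binom{k}{M}$ chooses this $M$-subset from $\{a_1, \dots, a_k\}$; $\binom{k-M+h-r}{h-r}$ chooses $C$ as an $(h-r)$-multiset of $\cA$ with support avoiding the chosen $M$-set, i.e., from the remaining $k-M+1$ elements; and $\binom{h}{M} - \binom{r}{M}$ counts the permissible $B$'s. For the last factor, summing over the number $j$ of copies of $a_0$ in $B$ and using the hockey stick identity,
\[\sum_{j=0}^{h-M}\binom{h-j-1}{M-1} = \binom{h}{M},\]
so $\binom{h}{M}$ is the number of $h$-multisets with nonzero support equal to the chosen $M$-set, while $\binom{r}{M}$ (the analogous count with $h$ replaced by $r$) records those $B$'s with at least $h-r$ copies of $a_0$, which violate (ii). Summing over $r \in \{1, \dots, h-1\}$ and $M \ge 1$ yields the stated bound.

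The main obstacle is verifying constraint (ii): its validity relies on the strict inequality $f > a_k$, and it is precisely this input that produces the subtracted term $\binom{r}{M}$ and thus the nontrivial bound; without it, one would only recover the weaker $\binom{h}{M}$ count. Carefully articulating the reduction and the hockey stick bookkeeping is where most of the work lies.
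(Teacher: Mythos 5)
Your proof is correct and takes essentially the same route as the paper: reduce a violating equation to a canonical witness in which $f$ appears on only one side with multiplicity $r$, use $f>a_k$ to force fewer than $h-r$ copies of $a_0$ on the $f$-free side (the paper's condition $m_0\le h-r-1$), and count witnesses by stars-and-bars over $r$ and the nonzero support size $M$, your hockey-stick/multiset evaluation just carrying out explicitly the ``simplifying'' the paper leaves implicit. Two cosmetic points: state the WLOG normalization $a_0=0$ (you rely on it in the padding step, in constraint (ii), and for the interval claim), and note that swapping a shared nonzero $x$ for $a_0$ preserves the equation rather than ``both sums.''
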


\begin{proof}
As the $B_h$ property is translation invariant, it is without loss of generality that we assume $a_0=0$.
There must be some equation
\[x_1+\dots+x_h = y_1+\dots +y_h,\quad x_i,y_i\in\cA\]
involving $f$. Without loss of generality, we can assume that $X\coloneqq \{x_1,\dots,x_h\}$ and $Y\coloneqq \{y_1,\dots,y_h\}$ are disjoint, and that $f\in Y$ with multiplicity $r$. Letting $m_i,n_i$ be the mulitplicity of $a_i$ in $X,Y$, respectively, we have
\begin{equation}\label{eq:f}
    f = \frac1r \sum_{i=0}^k (m_i-n_i)a_i, \quad \sum_{i=0}^k m_i=h,\quad \sum_{i=0}^k n_i = h-r,\quad  m_1n_1=\cdots=m_kn_k=0.
\end{equation}
Since we are only concerned with $f>a_k$, we further know that $r<\sum_{i=1}^k m_i$.
Let $M$ be the number of $m_1,\dots,m_k$ that are nonzero, and $N$ the number of $n_1,\dots,n_k$ that are nonzero.
For a fixed $r,m_0,M,N$, there are
\[\binom kM \binom{k-M}{N}\binom{h-m_0-1}N \binom{h-r}N\]
possible $m_0,m_1,\dots,m_k,n_0,n_1,\dots,n_k$ that satisfy the conditions on Line~\eqref{eq:f} (and $r<\sum_{i=1}^k m_i$), by nested stars-and-bars style counting. Summing over $0\le n \le \min\{h-r,k-M\}$, $1\le M \le \min\{k,h-m_0\}$, $0\le m_0 \le h-r-1$, and $1\le r \le h-1$ and simplifying completes the proof of the first sentence of this lemma. That all of these ``forbidden'' $f$ are in $[0,ha_k]$ is immediate from Line~\eqref{eq:f}.
\end{proof}
This proof doesn't take full consideration of the requirement that $f>a_k$, nor that $f$ is an integer. Consequently, there is room to improve, but we are skeptical that this bound can be improved by more than a factor of $1/2$ without a substantial new  idea.

We turn now to consider the greedy $B_h$-sets specifically. Lemma~\ref{lem:combinatorics lower} immediately gives the following lemma.
\begin{lemma}\label{lem:combinatorics}
    Let $k,h$ be positive integers. Then
    \begin{equation}\label{eq:combinatorics}
        \gamma_k(h) \ge \frac1h \left[\binom{k+h}{k}-1\right].
    \end{equation}
    Consequently,for fixed $k$ we have $\displaystyle \gamma_k(h) \ge \frac{1}{k!}\, h^{k-1} + O_k(h^{k-2})$ and for fixed $h$ we have $\displaystyle \gamma_k(h) \ge \frac{1}{h\cdot h!} k^h + O_h(k^{h-1})$.
\end{lemma}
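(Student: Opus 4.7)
The plan is to apply Lemma~\ref{lem:combinatorics lower} directly to the initial segment of the greedy set and then to extract the two asymptotic statements by expanding the binomial coefficient in the appropriate regime.

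For the inequality \eqref{eq:combinatorics}, I would set $\cA \coloneqq \{\gamma_0, \gamma_1, \gamma_2(h), \dots, \gamma_k(h)\}$. By construction this is a $B_h$-set with $a_0 = \gamma_0 = 0$ and $a_k = \gamma_k(h)$, so Lemma~\ref{lem:combinatorics lower} immediately yields $\gamma_k(h) \ge \tfrac1h\bigl[\binom{k+h}{k}-1\bigr]$. There is nothing beyond this substitution to verify for the main bound.

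For the two corollaries, the task is purely to read off the dominant term of $\binom{k+h}{k}$ in each regime. With $k$ fixed and $h\to\infty$, I would write
\[
\binom{k+h}{k} = \frac{(h+1)(h+2)\cdots(h+k)}{k!} = \frac{h^k}{k!} + O_k(h^{k-1}),
\]
divide by $h$, and absorb the constant $-1/h$ and the lower-order terms into $O_k(h^{k-2})$. Symmetrically, with $h$ fixed and $k\to\infty$,
\[
\binom{k+h}{k} = \binom{k+h}{h} = \frac{(k+1)(k+2)\cdots(k+h)}{h!} = \frac{k^h}{h!} + O_h(k^{h-1}),
\]
and dividing by $h$ gives $\gamma_k(h) \ge \frac{1}{h\cdot h!}k^h + O_h(k^{h-1})$.

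There is no real obstacle here: the content is entirely in Lemma~\ref{lem:combinatorics lower}, and the present lemma is a specialization plus a routine expansion of a binomial coefficient in two limiting directions. The only thing to be careful about is that the implied constants in the two $O$-terms depend on the frozen parameter, which is why I would annotate them as $O_k$ and $O_h$ respectively.
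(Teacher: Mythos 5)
Your proposal is correct and follows exactly the route the paper intends: the paper itself derives this lemma simply by noting that Lemma~\ref{lem:combinatorics lower} applied to $\{\gamma_0,\dots,\gamma_k(h)\}$ (with $a_0=\gamma_0=0$) gives the inequality, and the two asymptotic statements are the routine expansions of $\binom{k+h}{k}$ in the respective regimes, just as you describe.
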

The lower bound for fixed $h$ has been improved (see~\cite{2001.Green} for improvements for general $h$) for all $B_h$-sets, not only the greedy sets.
The last sentence of Lemma~\ref{lem:greedy forbiddens} implies that $\gamma_{k+1}(h) \le h\cdot \gamma_k(h)+1$, and from this it follows that the lower bound in Lemma~\ref{lem:combinatorics} for fixed $k$ has the correct order of growth.

We can set particular values for $h$ in Lemma~\ref{lem:greedy forbiddens} and get explicit bounds:
\begin{align*}
    \gamma_k(2) &\le \frac12 k^3 + \frac12 k \\
    \gamma_k(3) &\le \frac{1}{12}k^5+\frac{5}{12}k^4+\frac{3}{4}k^3+\frac{13 }{12}k^2+\frac{2}{3}k
\end{align*}
and so on. In general, one finds that
$$\gamma_k(h) \le \frac{1}{h!(h-1)!} k^{2h-1} + O_h(k^{2h-2}).$$
This author is not aware of any improvement to this bound.

The following formulas for $\gamma_k(h)$ for $k\le 4$ are proved in~\cites{2023.Nathanson,2023.Nathanson&OBryant}. We give proofs of these in Appendix~\ref{sec:appendix} in the spirit of the lower bound we give for $\gamma_5(h)$ in Theorem~\ref{thm:g5 lower}.
\begin{lemma}\label{lem:formulas}
    We have explicit formulas for $\gamma_k(h)$ for $k\le 4$:
    $\gamma_0=0$, $\gamma_1=1$, $\gamma_2(h)=h+1$, $\gamma_3(h)=h^2+h+1$, and $\gamma_4(h)=\dfloor{\frac{h+3}2}h^2+\dfloor{\frac{3h+2}2}$.
\end{lemma}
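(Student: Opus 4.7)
The plan is to apply the parametrization of forbidden integers from Lemma~\ref{lem:greedy forbiddens}: given the greedy $B_h$-set $\cA = \{\gamma_0, \gamma_1, \ldots, \gamma_k(h)\}$, an integer $f > \gamma_k(h)$ fails to extend it if and only if there exist $r \ge 1$ and nonnegative multiplicities $m_i, n_i$ satisfying $\sum m_i = h$, $\sum n_i = h - r$, $m_i n_i = 0$ for $i \ge 1$, and $rf = \sum_{i \ge 1}(m_i - n_i)\gamma_i(h)$ as in~\eqref{eq:f}. For each $k \in \{2, 3, 4\}$, I would identify the smallest $f > \gamma_{k-1}(h)$ for which no such parametrization exists.

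The case $\gamma_2(h) = h + 1$ is immediate: with $\cA = \{0, 1\}$, every $f \in \{2, \ldots, h\}$ is realized by $r = 1$ and $m_1 = f$ (the multiset identity $f = \underbrace{1+\cdots+1}_{f} + \underbrace{0+\cdots+0}_{h-f}$), while $f = h + 1$ exceeds the maximum achievable right-hand side of $h$, so no such equation exists.

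For $\gamma_3(h) = h^2 + h + 1$, with $\cA = \{0, 1, h+1\}$, the $r = 1$ equation reads $f = (m_1 - n_1) + (m_2 - n_2)(h + 1)$ subject to $m_0 + m_1 + m_2 = h$ and $n_0 + n_1 + n_2 = h - 1$, with disjointness $m_1 n_1 = m_2 n_2 = 0$. Taking $n_2 = 0$ and $j \coloneqq m_2 \ge 1$, as $m_1 - n_1$ ranges over $[-(h-1), h-j]$ the achievable values fill the interval $[(j-1)h + j + 1, \, (j+1)h]$. These $h$ intervals telescope to cover $[h+2, \, h^2 + h]$. Since for any $r \ge 1$ the right-hand side of the general equation is at most $h(h+1) < h^2 + h + 1$, the value $f = h^2 + h + 1$ cannot be forbidden, so it is the next greedy choice.

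For $\gamma_4(h)$, I would apply the same technique with $\cA = \{0, 1, h+1, h^2 + h + 1\}$ and the expanded equation $rf = (m_1 - n_1) + (m_2 - n_2)(h+1) + (m_3 - n_3)(h^2 + h + 1)$. The hard part is that near the target $\lfloor (h+3)/2 \rfloor h^2 + \lfloor (3h+2)/2 \rfloor$ the extremal cases require considering both $r = 1$ and $r = 2$ (and possibly $r = 3$), and the parity of $h$ decides whether various quotients $rf/r$ land on integers. I would split into the cases $h$ even and $h$ odd, and in each (i) exhibit for every integer $x$ with $\gamma_3(h) < x < \gamma_4(h)$ an explicit tuple $(r, m_i, n_i)$ realizing $rx$ on the right, and (ii) show that $x = \gamma_4(h)$ admits no such realization, the key being a parity or divisibility obstruction that rules out the only $r = 2$ candidates that come within reach of the target. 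The floor functions in the formula record exactly this parity split, so the same obstruction that forces the floors in the statement should drive the proof.
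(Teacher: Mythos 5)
Your treatment of $\gamma_2$ and $\gamma_3$ is correct and is essentially the argument in the paper (the paper phrases it through the $\mex$ characterization of Lemma~\ref{lem:mex} rather than through Lemma~\ref{lem:greedy forbiddens}, but the covering-by-intervals computation and the crude bound $rf\le h\gamma_2$ are the same). The problem is $\gamma_4$: there your proposal is only a plan, and the plan omits exactly the part that constitutes the proof. Step (i) --- exhibiting, for \emph{every} integer $x$ with $\gamma_3(h)<x<\gamma_4(h)$, an explicit representation --- is where all the work lies, and you give no construction. In the paper this is done by carving out a carefully constrained subfamily $\Delta$ of the $r=1$ representations (with the extra conditions $m'_2+m_3\le h$ and $m_3\le\frac{h+1}{2}$), ordering it odometer-style, and checking case by case that consecutive images increase by at most $1$, so the image set is an interval; a second explicit one-parameter family is then needed to push the covered interval up to $h^2\floor{(h+3)/2}+\floor{3h/2}$. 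Nothing in your sketch indicates how you would produce such representations, and "explicit tuple for each $x$" is not something one can write down without a device of this kind.

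Moreover, the difficulty you anticipate is in the wrong place. You expect the extremal analysis to involve $r=2$ (and possibly $r=3$) candidates that "come within reach of the target" and must be excluded by a parity/divisibility obstruction. In fact, for $r\ge 2$ the largest attainable value is $\frac1r\,h\gamma_3\le\frac12 h(h^2+h+1)$, which is already strictly below $h^2\floor{(h+3)/2}+\floor{3h/2}$; so all $r\ge2$ cases are dismissed by a one-line size bound and no parity enters. The genuine exclusion argument, and the source of the floor functions, lives entirely inside $r=1$: size estimates force the coefficient of $\gamma_3$ in any representation of the target to equal $\floor{(h+3)/2}$, and then reduction modulo $h+1$ leaves only two candidate values of the $\gamma_1$-coefficient, each of which violates the multiplicity constraints $\sum m_i\le h$, $\sum m'_i\le h-1$. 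Until you supply (a) a concrete covering construction for the interval $(\gamma_3(h),\gamma_4(h))$ and (b) this $r=1$ non-realizability argument, the $\gamma_4$ case of the lemma is not proved.
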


\section{The Greedy \texorpdfstring{${\mathbf B}_{\mathbf h}$}{B h}-set}
For the remainder of this work, we write $\gamma_k$ instead of $\gamma_k(h)$.

We set $\gamma_0=0$, and inductively set $\gamma_{k+1}$ to the smallest integer greater than $\gamma_k$ such that $\{\gamma_0,\gamma_1,\dots,\gamma_k,\gamma_{k+1}\}$ is a $B_h$-set. We call $\cG(h)\coloneqq \{\gamma_0,\gamma_1,\dots\}$ be the \emph{greedy $B_h$-set}. It is the lexicographically first infinite $B_h$-set of natural numbers.

We define
\begin{align}
  F_r^{(k)}&\coloneqq  \left\{ \frac1r \sum_{i=1}^k \gamma_i\cdot (m_i-m'_i) : m_i,m'_i \in \NN, \sum_{i=1}^k m_i \le h, \sum_{i=1}^k m'_i \le h-r\right\}, \label{eq:F_r}\\
  F^{(k)} & \coloneqq  \bigcup_{r=1}^h F_r. \label{eq:F}
\end{align}
We draw the reader's attention to the omission of $\gamma_0$ from the definition of $F_r^{(k)}$, and that the sum of multiplicities need not be exactly $h$, rather, the sum of multiplicities is at most $h$. Moreover, in the definition of $F_r$ we will often assume (without loss of generality) that at least one of $m_i,m'_i$ is $0$ for each $i$.

The $\mex$ of a set is the smallest nonnegative integer that is excluded from the set.
\begin{lemma}\label{lem:mex}
  The $(k+1)$-st positive element of the greedy $B_h$-set is \(\displaystyle  \gamma_{k+1} = \mex F^{(k)}\).
\end{lemma}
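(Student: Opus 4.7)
The plan is to prove $\gamma_{k+1} = \mex F^{(k)}$ by establishing both (i) $\gamma_{k+1} \notin F^{(k)}$ and (ii) every nonnegative integer strictly less than $\gamma_{k+1}$ lies in $F^{(k)}$. The engine for both directions is a dictionary: for a positive integer $f$, membership $f \in F^{(k)}$ is equivalent to the existence of a $B_h$-violation in $\{\gamma_0,\dots,\gamma_k,f\}$ that uses $f$.

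First I would set up this dictionary. Given a $B_h$-violation $x_1+\dots+x_h = y_1+\dots+y_h$ with multisets $X\ne Y$, cancel all elements common to both sides (preserving both sums and the inequality). Because $\{\gamma_0,\dots,\gamma_k\}$ is already a $B_h$-set, $f$ must appear, and after swapping sides we may assume it appears only in $Y$ with some multiplicity $r\in\{1,\dots,h\}$. Let $m_i$ (resp.\ $m'_i$) denote the multiplicity of $\gamma_i$ in $X$ (resp.\ $Y$) for $i\ge 1$. Because $\gamma_0=0$ contributes nothing to the sums, the remaining entries of $X$ and $Y$ can be taken to be copies of $\gamma_0$, forcing $\sum m_i \le h$ and $\sum m'_i \le h-r$. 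The relation $rf = \sum_{i=1}^k(m_i-m'_i)\gamma_i$ then witnesses $f\in F_r^{(k)}$. Conversely, any representation $f\in F_r^{(k)}$ reassembles into multisets $X,Y$ of size $h$ (padding with $\gamma_0$) having equal sums; these are unequal whenever $f\notin\{\gamma_1,\dots,\gamma_k\}$ since only $Y$ contains $f$.

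Direction (i) is then immediate: if $\gamma_{k+1}\in F^{(k)}$, the dictionary would produce a $B_h$-violation in $\{\gamma_0,\dots,\gamma_k,\gamma_{k+1}\}$, contradicting the definition of $\gamma_{k+1}$ (and since $\gamma_{k+1}>\gamma_k$, the relevant multisets are genuinely distinct). For (ii), note that $0\in F^{(k)}$ trivially and $\gamma_j\in F^{(k)}$ for $1\le j\le k$ by taking $r=1$, $m_j=1$. For any remaining positive $x<\gamma_{k+1}$, choose the unique index $i\le k$ with $\gamma_i<x<\gamma_{i+1}$; by greediness $\{\gamma_0,\dots,\gamma_i,x\}$ is not a $B_h$-set, so the dictionary at stage $i$ places $x$ in $F^{(i)}$, and $F^{(i)}\subseteq F^{(k)}$ follows from padding representations with $m_j=m'_j=0$ for $i<j\le k$.

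The only real obstacle is the bookkeeping in the dictionary: carefully canceling common occurrences to reach disjoint multisets, isolating $f$ onto one side with its effective multiplicity $r$, and using $\gamma_0=0$ as filler so that both sides end up with exactly $h$ entries. Nothing is conceptually deep, but matching the cardinality constraints $\sum m_i\le h$ and $\sum m'_i\le h-r$ to the definition of $F_r^{(k)}$ on the nose is the step most prone to slip.
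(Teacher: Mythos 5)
Your proposal is correct and uses essentially the same argument as the paper: the same dictionary between representations $rf=\sum_{i=1}^k(m_i-m'_i)\gamma_i$ (with $\gamma_0=0$ as padding and the constraints $\sum m_i\le h$, $\sum m'_i\le h-r$) and $B_h$-violations involving $f$, applied in both directions. The only organizational difference is that you anchor the comparison at $\gamma_{k+1}$ (showing every smaller nonnegative integer lies in $F^{(k)}$, using $F^{(i)}\subseteq F^{(k)}$ for the integers below $\gamma_k$) while the paper anchors at $\mex F^{(k)}$; this is a mirror-image framing, and if anything slightly more explicit about why $\mex F^{(k)}>\gamma_k$.
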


\begin{proof}
If $\gamma_k<x<\mex F$, then $x=\frac1r \sum_{i=1}^k \gamma_i \cdot (m_i-m'_i)$ as in~\eqref{eq:F_r}. Whence, as $\gamma_0=0$, we have
  \[x\cdot r + \sum_{i=1}^k \gamma_i m'_i +\gamma_0\cdot (h-r-\sum_{i=1}^k m'_i)= \sum_{i=1}^k \gamma_i m_i + \gamma_0\cdot (h-\sum_{i=1}^k m_i),\]
proving that $\{\gamma_0,\dots,\gamma_k\}\cup\{x\}$ is not a $B_h$-set. On the other hand, let $x=\mex F$ and suppose that $\{\gamma_0,\dots,\gamma_k\}\cup\{x\}$ is not a $B_h$-set, so that there is a solution to
  \[x m_{k+1}+\sum_{i=0}^k \gamma_i m_i = x m'_{k+1}+\sum_{i=0}^k \gamma_i m'_i\]
with $\sum_{i=0}^{k+1} m_i =\sum_{i=0}^{k+1} m'_i = h$.
As (inductively) $\{\gamma_0,\dots,\gamma_k\}$ is a $B_h$-set, it must be that at least one of $m_{k+1},m'_{k+1}$ is positive, say $m'_{k+1}\ge m_{k+1}\ge 0$. If $m'_{k+1} = m_{k+1}$, then
  \[\sum_{i=1}^k \gamma_i m_i + \gamma_0\cdot(m_0+m_{k+1})= \sum_{i=1}^k \gamma_i m'_i + \gamma_0 \cdot(m'_0+m'_{k+1}),\]
contradicting that $\{\gamma_0,\dots,\gamma_k\}$ is a $B_h$-set. Ergo, we may set $r\coloneqq m'_{k+1}-m_{k+1}>0$. We have
  \[\sum_{i=1}^k \gamma_i m_i = xr + \sum_{i=1}^k \gamma_i m'_i, \]
which we can rearrange to
\[ x = \frac1r \sum_{i=1}^k \gamma_i \cdot(m_i-m'_i),\]
with $\sum_{i=1}^km_i = h - m_0-m_{k+1}\le h$ and $\sum_{i=1}^k m'_i =h-(m'_0+m'_{k+1})\le h-r$.
\end{proof}

\section{Formulas for \texorpdfstring{${\mathbf \gamma}_{\mathbf k}$}{gamma sub k}}
The formula $\gamma_0=0$ is by definition, and the formula $\gamma_1=1$ is immediate. In~\cite{2023.Nathanson}, Nathanson provides detailed proofs of the formulas
\(
    \gamma_2 = h+1\)
    and
    \(\gamma_3 = h^2+h+1\).
In~\cite{2023.Nathanson&OBryant}, Nathanson and the author prove that
\[ \gamma_4 = h^2\dfloor{\frac{h+3}2}+\dfloor{\frac{3h+2}2}.\]

Computation (OEIS \seqnum{A369818}) of $\gamma_5$ for $h\le 47$ has yielded a conjectural formula.
\begin{conjecture}
    \[
    \gamma_5 = \frac{h^4}{3}+\frac16 \cdot \begin{cases}
        5 h^3+7 h^2+7 h+9,
        & \text{if $h \equiv 1 \pmod6$} \\
        4 h^3+10 h^2+6h+4,
        & \text{if $h \equiv 2 \pmod6$} \\
        7 h^3+5 h^2+9 h+3,
        & \text{if $h \equiv 3 \pmod6$} \\
        406,
        & \text{if $h=4$} \\
        6h^3+6h^2+8 h+8,
        & \text{if $h \equiv 4 \pmod6$ and $h\ge 10$} \\
        6h^3+6h^2+8 h+6,
        & \text{if $h \equiv 5 \pmod6$} \\
        5 h^3+8 h^2+7 h+6,
        & \text{if $h \equiv 6 \pmod6$} \\
    \end{cases}
    \]
\end{conjecture}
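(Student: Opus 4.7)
The plan is to apply Lemma~\ref{lem:mex}, which reduces the conjecture to showing that $\mex F^{(4)}$ equals the claimed piecewise polynomial $V(h)$. Since $F^{(3)}\subseteq F^{(4)}$ and $\gamma_4=\mex F^{(3)}$, every integer in $[0,\gamma_4]$ automatically lies in $F^{(4)}$, so the task splits into two pieces: (i) every integer $x$ with $\gamma_4<x<V(h)$ admits a representation $x=\frac{1}{r}\sum_{i=1}^{4}\gamma_i(m_i-m'_i)$ satisfying the multiplicity constraints of \eqref{eq:F_r}, and (ii) $V(h)\notin F^{(4)}$. Because $\gamma_4$ involves a parity-dependent floor and $V(h)$ is defined piecewise modulo~$6$, the whole analysis must be run separately in each residue class of $h\bmod 6$.

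For (ii), I would perform a finite case analysis on $r\in\{1,2,\dots,h\}$. After clearing denominators, $rV(h)=\sum\gamma_i(m_i-m'_i)$ is a Diophantine constraint whose leading term in $h$ is severely restrictive: since $V(h)\sim h^4/3$ and the largest summand is $\gamma_4\sim h^3/2$ (indeed, $\gamma_4=(h+1)^3/2$ for $h$ odd), only a bounded number of profiles $(r,m_4,m'_4)$ can produce the leading coefficient $1/3$. For each such profile the residual equation is linear in $(m_1,m_2,m_3,m'_1,m'_2,m'_3)$, and non-solvability can typically be detected by a single congruence condition modulo a small fixed integer, from which the mod-$6$ dependence of the formula emerges naturally. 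This refines the method that proves Theorem~\ref{thm:g5 lower}: there one needed only to push below $\tfrac18h^4+\tfrac12h^3$, whereas here one must exclude the exact value $V(h)$.

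For (i), the natural starting point is the greedy base expansion $x=m_4\gamma_4+m_3\gamma_3+m_2\gamma_2+m_1$; if $m_1+m_2+m_3+m_4\le h$ we are done with $r=1$ and all $m'_i=0$. Unfortunately the greedy digits can sum to roughly $\tfrac{19h}{6}$ in the worst case, which exceeds $h$. One must therefore exploit the near-geometric structure of $\gamma_1,\gamma_2,\gamma_3,\gamma_4$: by Lemma~\ref{lem:formulas} one has $h\gamma_j\approx \gamma_{j+1}$ up to an error of size $O(\gamma_{j-1})$, so a block of many $\gamma_j$-summands can be traded for a handful of $\gamma_{j+1}$-summands plus bounded $m'_i$ corrections, bringing the total multiplicity back under $h$. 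Where this still fails, using denominators $r\ge 2$ enlarges $F^{(4)}$ enough to cover the remaining integers, with the specific tuning again dictated by $h\bmod 6$.

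The main obstacle is task~(i). Producing a representation for \emph{every} integer in the length-$\Theta(h^4)$ interval $(\gamma_4,V(h))$ requires a uniform recipe that handles the bulk of the interval by the greedy-plus-correction scheme above, and then a separate tailored scheme for the last $O(h^3)$ values near the upper boundary. It is precisely in this boundary region that the different residues of $h\bmod 6$ permit different residues for the expressions available in $F^{(4)}$, producing the seven-case structure of the conjecture. Simultaneously controlling the coarse covering and the delicate boundary, across all six residue classes, is a substantial bookkeeping problem, and is presumably why the formula has so far resisted proof despite its verification for all $h\le 47$.
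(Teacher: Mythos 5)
The statement you are addressing is presented in the paper as a \emph{conjecture}: the author gives no proof, only numerical verification for $h\le 47$, and elsewhere proves only the much weaker lower bound $\gamma_5\ge\tfrac18h^4+\tfrac12h^3$ (Theorem~\ref{thm:g5 lower}) together with the upper bound $\gamma_5\le 0.467214\,h^4+O(h^3)$. Your submission is likewise not a proof but a strategy outline, and the gap is exactly at the two places where all the mathematical content would have to live. For task~(i) you never exhibit the ``uniform recipe'': the claim that a block of $\gamma_j$-summands can be traded for $\gamma_{j+1}$-summands ``plus bounded $m'_i$ corrections, bringing the total multiplicity back under $h$'' is asserted, not verified, and it is precisely this kind of trade that the multiplicity budget $\sum m_i\le h$, $\sum m'_i\le h-r$ makes delicate. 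The paper's own experience is instructive here: the carefully engineered odometer set $\Delta$ of Section~\ref{sec:lower bound} — which the author says took an unreasonable amount of effort to find — only covers an interval up to $\tfrac18h^4+\tfrac12h^3$, well short of the conjectured $\tfrac13h^4$, so covering all of $(\gamma_4,V(h))$ is not a routine extension of that method but the open problem itself. You acknowledge this (``presumably why the formula has so far resisted proof''), which is honest, but it means no proof has been given.

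Task~(ii) is also unproven as written. Your leading-coefficient heuristic does not by itself bound the profiles $(r,m_4,m'_4)$: for $r=1$ the value $m_4$ can range over roughly $\tfrac23 h$ choices with $m_4\gamma_4$ near $V(h)$ only when $m_4\approx\tfrac{2h}{3}$, but one must still rule out representations in which $m_4$ is smaller and the deficit is made up by large multiples of $\gamma_3$, and the residual equation is then a genuinely two- or three-parameter Diophantine problem, not one ``typically detected by a single congruence modulo a small fixed integer'' — compare the $\gamma_4$ exclusion argument in Appendix~\ref{sec:appendix}, which already needs a size argument pinning $d_3$ to a single value before a congruence finishes, and that is one degree lower. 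In short, your plan reproduces the general architecture the paper uses for $\gamma_2,\gamma_3,\gamma_4$ (mex via Lemma~\ref{lem:mex}, interval covering, then exclusion of the target value), but both the covering construction and the exclusion analysis are left entirely unexecuted, so the conjecture remains exactly as open after your proposal as before it.
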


Paul Voutier~\cite{Voutier} has computed $\gamma_6$ for $1\le h \le 33$, generating the following sequence (OEIS~\seqnum{A369819}), for which no formula has yet been guessed.

\begin{center}\begin{tabular}{cc|cc|cc}
    $h$ & $\gamma_6$ &$h$ & $\gamma_6$ &$h$ & $\gamma_6$ \\ \hline
    1 & 6 & 13 & 84026 & 25 & 1916949 \\
    2 & 30 & 14 & 109870 & 26 & 2361150 \\
    3 & 124 & 15 & 156474 & 27 & 2859694 \\
    4 & 368 & 16 & 217790 & 28 & 3467661 \\
    5 & 926 & 17 & 304910 & 29 & 3989744 \\
    6 & 2214 & 18 & 376260 & 30 & 4779270 \\
    7 & 4181 & 19 & 510220 & 31 & 5479857 \\
    8 & 8043 & 20 & 667130 & 32 & 6449983 \\
    9 & 13818 & 21 & 794873 & 33 & 7575912 \\
    10 & 23614 & 22 & 1008048 & 34 & ? \\
    11 & 34825 & 23 & 1302947 & 35 & ? \\
    12 & 54011 & 24 & 1629264 & 36 & ? \\
\end{tabular}\end{center}

While all of $\gamma_0,\dots,\gamma_4$ are quasi-polynomials, and $\gamma_5$ appears to be, this author is skeptical that all $\gamma_k$ are. However, it is plausible that each $\gamma_k$, if $h$ is sufficiently large, is given by an expression built up from rational functions in $h$ and floor functions.

In Appendix~\ref{sec:appendix}, we give the alternate proofs of the formulas for $\gamma_2,\gamma_3,\gamma_4$ in the style of our proof in the next section that $\gamma_5 \ge \frac18 h^4 + \frac12 h^3$.

\section{A Lower Bound on \texorpdfstring{$\mathbf{\gamma}_{\mathbf{5}}$}{gamma sub 5}}
\label{sec:lower bound}
We now proceed to prove that $\gamma_5(h) \ge \frac18 h^4 +\frac12 h^3$. We assume that $h\ge 5$, as the inequality holds by direct computation for $1\le h \le 4$. The ``Iverson Bracket'' $\tf{P}$ is $1$ if $P$ is true, and is $0$ if $P$ is false.
We define functions $\ell_i,u_i$ for $1\le i \le 4$ as follows:
\renewcommand{\arraystretch}{2}
\[
\begin{array}{|rl|rl|} \hline
    \ell_4 &\coloneqq 1 &
    u_4 &\displaystyle \coloneqq\dfloor{\frac{h+3}4} \\
    \ell_3(d_4)&\displaystyle \coloneqq d_4 - \dfloor{\frac{h+1}2} &
    u_3(d_4) &\displaystyle\coloneqq \dfloor{\frac{h+1}2} - d_4\\
    \ell_2(d_4,d_3) &\coloneqq d_4+|d_3|-h &
    u_2(d_4,d_3) &\coloneqq h-d_4-|d_3| \\
    \ell_1(d_4,d_3,d_2) &\coloneqq \displaystyle 1-h-\sum_{i=1}^4 d_i\tf{d_i \le 0} &
    u_1(d_4,d_3,d_2) &\displaystyle \coloneqq h-\sum_{i=1}^4 d_i\tf{d_i\ge 0} \\ \hline
\end{array}
\]\renewcommand{\arraystretch}{1}
and now set
\begin{multline*}
  \Delta \coloneqq  \bigg\{ \p{d_1,d_2,d_3,d_4}
    : d_i \in \ZZ, \quad \sum_{i=1}^4 d_i\tf{d_i\ge 0} \le h,
    \quad \sum_{i=1}^4 d_i\tf{d_i< 0} \ge 1-h, \\
    \ell_4 \le d_4 \le u_4,\quad
    \ell_3(d_4)\le d_3 \le u_3(d_4),\\
    \ell_2(d_4,d_3) \le d_2 \le u_2(d_4,d_3),\quad
    \ell_1(d_4,d_3,d_2) \le d_1 \le u_1(d_4,d_3,d_2)\bigg\}.
\end{multline*}
The author is unreasonably proud of this definition, in large part because of the unreasonable amount of time, effort, and computation he spent finding it. It is not dictated by the argument below, and may not be ideal.

For an element $\delta\in\Delta$, its \emph{image} is the integer $\delta\cdot\vg$. Clearly the images of elements of $\Delta$ are in $F_1$ (defined on line~\eqref{eq:F_r}), and so $\gamma_5$ is at least the $\mex$ of the images of $\Delta$.

We order distinct vectors of the same length by setting $\mu<\nu$ if either $last(\mu)<last(\nu)$ or both $last(\mu)=last(\nu)$ and $drop(\mu,-1)<drop(\nu,-1)$. Otherwise, $\mu \ge \nu$. That is, we order vectors lexicographically considering first their last components. On an odometer, the units digit climbs from its minimum to its maximum, and then resets to its minimum at the same moment the tens digit increases by one. If both the ones and tens digit are at their maximum, then the hundreds digits increments when the ones and tens digits reset to their minimums. Our ordering on $\Delta$ is in this spirit, with the least significant digits listed first.

We note that both $\p{0,0,0,1}$, with image $\gamma_4$, and
  \(\p{0,h/2,h/4,\floor{(h+3)/4}}\)
with image $\floor{(h+3)/4} \gamma_4 \ge \frac18 h^4+\frac14 h^3$, are in $\Delta$.

Let $\mu_0<\mu_1<\cdots$ be the ordered elements of $\Delta$. We will show that the images of $\mu_0,\mu_1,\dots$, a sequence of integers, increases by at most $1$ at each term. Thus, the image of $\Delta$ is an interval. As $\p{0,0,0,1}\in\Delta$, we know that $\gamma_4$ is an image, and also
\[\p{0,\dfloor{\frac h2},\dfloor{\frac{h+1}4},\dfloor{\frac{h+3}4}} \in \Delta\]
which has image
\[ \dfloor{\frac h2}\gamma_2+\dfloor{\frac{h+1}4}\gamma_3+\dfloor{\frac{h+3}4}\gamma_4\ge \frac18 h^4 +\frac 12 h^3.\]
This will prove Theorem~\ref{thm:g5 lower}.

Suppose that $\delta<\nu$ are consecutive elements of $\Delta$, say $\delta=\p{d_1,d_2,d_3,d_4}$ and $\nu=\p{v_1,v_2,v_3,v_4}$.  There are only a four possibilities:
\begin{enumerate}[noitemsep,label=(\emph{\roman*})]
    \item $d_1< u_1(d_4,d_3,d_2)$, $\nu=\p{d_4,d_3,d_2,d_1+1}$;
    \item \label{case (ii)}
        $d_1 = u_1(d_4,d_3,d_2), d_2< u_2(d_4,d_3)$,\\ $\nu=\p{\ell_1(d_4,d_3,d_2+1),d_2+1,d_3,d_4}$;
    \item \label{case (iii)}
        $d_1=u_1(d_4,d_3,d_2),d_2=u_2(d_4,d_3), d_3 < u_3(d_4)$,\\
        $\nu = \p{\ell_1(d_4,d_3+1,\ell_2(d_4,d_3+1)),\ell_2(d_4,d_3+1),d_3+1,d_4}$;
    \item \label{case (iv)}
        $d_1=u_1(d_4,d_3,d_2),d_2=u_2(d_4,d_3), d_3 = u_3(d_4),d_4<u_4$,\\
        $\nu = \p{ \ell_1(d_4+1,\ell_3(d_4+1), L), L, \ell_3(d_4+1), d_4+1 } $,
        where $L = \ell_2(d_4+1,\ell_3(d_4+1))$.
\end{enumerate}
In each case, the image goes from $\delta\cdot \vg$ to $\nu\cdot \vg$. Thus, our task is to show that in all cases $$(\nu-\delta)\cdot\vg \le 1.$$

Each case is a straightforward usage of the description of the case, the definitions of $\ell_i,u_i$, and solving algebraic inequalities. One needs to split case~\ref{case (ii)} into 4 subcases, depending on the signs of $d_3,d_2$, and case~\ref{case (iii)} splits into 2 subcases depending on the sign of $d_3$. In case~\ref{case (iv)}, it is helpful to break into $4$ subcases depending on the residue of $d_4$ modulo $4$.

We show these details in Appendix~\ref{appendix:B}.

\section{An upper bound on \texorpdfstring{${\mathbf \gamma}_{\mathbf k}$}{gamma sub k}}
\label{sec:upper bound}
In this section we determine a nonincreasing sequence of real numbers $\alpha_{k}$ with $$\gamma_{k+1}(h) \le \alpha_{k+1} h^k+O(h^{k-1})$$ and $\alpha_k\to0$. We will inductively use the value of $\alpha_k$ to give $\alpha_{k+1}$. We take $\alpha_1 = 1$, $\alpha_2=1$, $\alpha_3=1$, $\alpha_4=\frac12$, in accordance with our formulas for $\gamma_k$, $0\le k \le 4$, and we assume henceforth that $k\ge 4$ and are working to bound $\gamma_{k+1}$.

Fix a real number $\beta_k$ with $\frac12\alpha_k \le \beta_k \le \alpha_k$. We will use
\begin{equation*}
    \beta_4 \coloneqq  0.406671, \quad
    \beta_5 \coloneqq  0.308672, \quad
    \beta_6 \coloneqq  0.203975, \quad
    \beta_k \coloneqq  \frac12 \alpha_k ,\quad (k\ge 7).
\end{equation*}
The engine of our bound (recall the definitions of $F_r^{(k)},F_r$ on lines~\eqref{eq:F_r},~\eqref{eq:F}) is that
\begin{align}
    \gamma_{k+1}=\mex F^{(k)}
    &\le 1+ \beta_k h^k + \left| F^{(k)} \cap (\beta_k h^k,\infty)\right| \notag \\
    &\le 1+\beta_k h^k+ \sum_{r=1}^{h} \big| F_r^{(k)} \cap (\beta_k h^k,\infty) \big|.\label{eq:engine}
\end{align}

For $r\ge 3$, (and sufficiently large $h$, a hypothesis we use repeatedly in this section)
\[\max F_r^{(k)} = \frac1r \cdot h \gamma_k \le \frac 13 \cdot h \cdot \left(\alpha_k h^{k-1}+O(h^{k-2})\right) < \frac12 \alpha_k h^k \le \beta_k h^k.\]
Thus, for $r\ge 3$ we know that $|F_r^{(k)} \cap (\beta_kh^k,\infty)|=0$. For $r=2$, we arrive at
\[\max F_2^{(k)} \le \beta_k h^k + O(h^{k-1}),\]
whence $|F_2^{(k)} \cap (\beta_k h^k,\infty)| = O(h^{k-1})$.
The bound on line~\eqref{eq:engine} simplifies to
\begin{equation}\label{eq:engine2}
    \gamma_{k+1} \le \beta_k h^k + O(h^{k-1})+ | F_1^{(k)} \cap (\beta_k h^k,\infty) |.
\end{equation}

We now stratify $F_1^{(k)}$ as $F_1(m_k,m'_k)$, where at least one of $m_k,m'_k$ is $0$, and $0\le m_k \le h$, and $0 \le m'_k \le h-1$. Specifically
\begin{multline*}
F_1(m_k,m'_k) \coloneqq  (m_k-m'_k)\gamma_k + \bigg\{\sum_{i=1}^{k-1} (m_i-m'_i)\gamma_i :\\
\sum_{i=1}^{k-1} m_i \le h-m_k,\quad \sum_{i=1}^{k-1} m'_i \le h-1-m'_k,\quad m_1m'_1=\cdots=m_{k-1}m'_{k-1}=0\bigg\}.
\end{multline*}
The largest element of $F_1(0,m'_k)$ is
\begin{multline*}
\max F_1(0,m'_k) =-m'_k \gamma_k+h\gamma_{k-1} \leq h \cdot \big(\alpha_{k-1}h^{k-2}+O(h^{k-3})\big)\\
= \alpha_{k-1} h^{k-1}+O(h^{k-2}) < \beta_k h^k,
\end{multline*}
(for sufficiently large $h$), so that $F_1(0,m'_k) \cap (\beta_k h^k,\infty) = \emptyset$.
We may thus assume without loss that $m_k>0$ and $m'_k=0$. The largest element of $F_1(m_k,0)$ is
\begin{equation*}
\max F_1(m_k,0) =m_k\gamma_k + (h-m_k)\gamma_{k-1} \le \alpha_k \frac{m_k}{h} h^k + O(h^{k-1}).
\end{equation*}
If $m_k< h{\beta_k}/{\alpha_k}$, then $\max F_1(m_k,0)$ is less than $\beta_k h^k$ (as always in this section, for large $h$). Ergo, we may assume that $m_k \ge h{\beta_k}/{\alpha_k}$ (recall our assumption that $\beta_k\le \alpha_k$).

Equation~\eqref{eq:engine2} now gives us
\begin{equation}\label{eq:engine3}
    \gamma_{k+1} \le \beta_k h^k +O(h^{k-1}) + \sum_{\substack{m_k \\ \frac{\beta_k}{\alpha_k} h \le m \le h}} | F_1(m_k,0) |.
\end{equation}
Notice that we have dropped the intersection with the interval $(\beta_k h^k,\infty)$ from our concerns. That might hurt the bound, but simplicity is its own reward.

We have
\begin{multline*}
  |F_1(m_k,0)| \le \bigg| \bigg\{\big((m_1,\dots,m_{k-1}),(m'_1,\dots,m'_{k-1})\big) : \\
  \sum_{i=1}^{k-1}m_i \le h-m_k,\quad \sum_{i=1}^{k-1}m'_i \le h-1,\quad m_1m'_1=\cdots=m_{k-1}m'_{k-1}=0 \bigg\}\bigg|.
\end{multline*}
Suppose that exactly $j$ of $m'_1,\dots,m'_{k-1}$ are nonzero. By stars-and-bars ($h-1-j$ stars and $j$ bars), there are $\binom{k-1}{j}\binom{h-1}{h-1-j}$ such tuples. For each such tuple, there are $h-m_k$ stars and $(k-1-j)$ bars in the count of tuples $(m_1,\dots,m_{k-1})$; that is, $\binom{h-m_k+(k-1-j)}{h-m_k}$ valid tuples $(m_1,\dots,m_{k-1})$. Altogether, then
\[
|F_1(m_k,0)| = \sum_{j=0}^{k-1}\binom{k-1}{j}\binom{h-1}{h-1-j} \binom{h-m_k+(k-1-j)}{h-m_k}.\]
Clearly,
    \[\binom{h-1}{h-1-j} \le \frac{1}{j!} h^j \]
and since $h-m_k = O(h)$ also
\begin{align*}
\binom{h-m_k+(k-1-j)}{h-m_k}
    &\le \frac{(h-m_k+k-1-j)^{k-1-j}}{(k-1-j)!} \\
    &= \frac{1}{(k-1-j)!} \sum_{\ell=0}^{k-1-j}\binom{k-1-j}{\ell}(h-m_k)^{k-1-j-\ell}(k-1-j)^{\ell}\\
    &= \frac{1}{(k-1-j)!}(h-m_k)^{k-1-j} + O((h-m_k)^{k-2-j})\\
    &=h^{k-1-j} \frac{(1-m_k/h)^{k-1-j}}{(k-1-j)!} + O(h^{k-2-j}).
\end{align*}
Thus,
\begin{align*}
|F_1(m_k,0)| &\le \sum_{j=0}^{k-1}\binom{k-1}{j}\frac{h^j}{j!}\left(\frac{h^{k-1-j}(1-m_k/h)^{k-1-j}}{(k-1-j)!} + O(h^{k-2-j})\right) \\
&= h^{k-1}\left(\sum_{j=0}^{k-1}\binom{k-1}{j}\frac{(1-m_k/h)^{k-1-j}}{j!(k-1-j)!}\right) + O(h^{k-2}).
\end{align*}

Since
\[\sum_{\substack{m_k \\ \frac{\beta_k}{\alpha_k} h \le m \le h}} (1-m_k/h)^{k-1-j} =
h \int_{\beta_k/\alpha_k}^1 (1-x)^{k-1-j}\,dx +O(1)= h \frac{\left(1-\beta _k/\alpha _k\right)^{k-j}}{k-j}+O(1),\]
we have
\begin{align*}
    \sum_{\substack{m_k \\ \frac{\beta_k}{\alpha_k} h \le m \le h}} | F_1(m_k,0) |
    &=h^k \left(\sum_{j=0}^{k-1}\binom{k-1}{j}\frac{(1-\beta_k/\alpha_k)^{k-j}}{j!(k-j)!}\right) + O(h^{k-1})
\end{align*}

Line~\eqref{eq:engine3} gives us $\gamma_{k+1} \le \alpha_{k+1} h^k + O(h^{k-1})$ provided that
    \begin{equation}\label{eq:constraint}
        \beta_k+ \left(\sum_{j=0}^{k-1}\binom{k-1}{j}\frac{(1-\beta_k/\alpha_k)^{k-j}}{j!(k-j)!}\right) \le \alpha_{k+1}.
    \end{equation}
Recall our earlier assumption that $\beta_k \leq \frac12 \alpha_k$, which (it turns out) has no slack for $k\ge 7$.

We proceed by choosing $\beta_k$ to minimize this expression, giving us the smallest possible value of $\alpha_{k+1}$.
\begin{equation*}
    \beta_4 \coloneqq  0.406671, \quad
    \beta_5 \coloneqq  0.308672, \quad
    \beta_6 \coloneqq  0.203975, \quad
    \beta_k \coloneqq  \frac12 \alpha_k ,\quad (k\ge 7).
\end{equation*}

We use $\beta_k$ and $\alpha_k$ to compute $\alpha_{k+1}$, and for $k\ge 6$ we then use $\alpha_{k+1}$ to compute $\beta_{k+1}$.
\[
\begin{array}{rcc}
    k & \beta_k & \alpha_k  \\ \hline
    1 &          & 1\\
    2 &          & 1         \\
    3 &          & 1       \\
    4 & 0.406671 & 1/2         \\
    5 & 0.308672 & 0.467214,  \\
    6 & 0.203975 & 0.382978, \\
    7 & & 0.269877,\\
\end{array}
\]

Now set $\beta_k=\frac12 \alpha_k$ for $k\ge7$. We have
\begin{align*}
  \beta_k+ \left(\sum_{j=0}^{k-1}\binom{k-1}{j}\frac{(1-\beta_k/\alpha_k)^{k-j}}{j!(k-j)!}\right)
  &= \frac{\alpha_k}{2}+ \sum_{j=0}^{k-1}\binom{k-1}{j}\frac{2^{-(k-j)}}{j!(k-j)!} \\
  &= \frac{\alpha_k}{2}+\frac{1}{2^k k!} \sum_{j=0}^{k-1}\binom{k-1}{j}\binom kj 2^j.
\end{align*}
Thus, we can set for $k \ge 7$
\[\alpha_{k+1} =  \frac{\alpha_k}{2}+\frac{1}{2^k k!} \sum_{j=0}^{k-1}\binom{k-1}{j}\binom kj 2^j.\]

We comment that
\begin{equation*}
  \alpha_{k+1}\le \frac{\alpha_k}{2}+\frac{1}{2k\cdot k!}\sum_{j=0}^k \binom{k}{j}^2
  = \frac{\alpha_k}{2}+\frac{1}{2k\cdot k!} \binom{2k}{k}
  \le \frac{\alpha_k}{2}+\frac{4^k}{2k\cdot k!},
\end{equation*}
and so we have $(2-\epsilon)^k\alpha_k=o(1)$ for every $\epsilon>0$.

%
%
%
%

\section{Problems That Have Not Been Solved}
The first problem is to find and prove formulas for $\gamma_k$ for as many values of $k$ as possible. A useful start to this would be a faster algorithm for computing $\cG(h)$, or at least a faster implementation.

Lacking formulas for $\gamma_k$, we hope for upper bounds superior to that proved in Theorem~\ref{thm:upper bound}, and for lower bounds applicable for $k\ge 6$ and superior to that in Theorem~\ref{thm:g5 lower} and Lemma~\ref{lem:combinatorics lower}.

All of the known and conjectured formulas for $\gamma_k$ are eventually quasi-polynomials in $h$: there is a modulus $m$ and polynomials $p_1,\dots,p_m$ and $\gamma_k(h)=p_{h\bmod m}(h)$ (for sufficiently large $h$). Can one show that all $\gamma_k$ are quasi-polynomials? Moreover, all of the coefficients in all of the quasi-polynomials are nonnegative. No explanation is known.

Quasi-polynomials arise as Ehrhart polynomials. Is there a more concrete connection? Is there a region whose Ehrhart polynomial is $\gamma_4$ or $\gamma_5$?

In~\cite{2023.Nathanson&OBryant}, it is noted that we can't even prove that $\gamma_k(h)<\gamma_k(h+1)$.

Are the elements of $\cG(h)$ uniformly distributed in congruence classes? Experiments with the $25\,000$ known terms of $\cG(2)$ show an unbalance modulo $27$ (and even more modulo $221$) that is unlikely to be coincidental.

Is there a solution to $33=\gamma_k(2)-\gamma_\ell(2)$? There is a solution with $0\le \ell<k<25\,000$ for every positive integer from $1$ to $87$ except $33$.

In~\cite{1993.Zhang} it is shown that the maximum value of
\[\sum_{k=0}^\infty \frac{1}{a_k+1}\]
over all $B_2$ sets $\mathcal{A}=\{0=a_0<a_1<\dots\}$ is not obtained by $\mathcal{A}=\mathcal{G}(2)$. Is this also the case for $h\ge 3$? What is infimum of those $s$ for which
\[\sum_{k=0}^\infty \frac{1}{(\gamma_k+1)^s} = \sup_{\cA\text{ a $B_h$-set}}\, \sum_{a\in \cA} \frac{1}{(a+1)^s}\]

\begin{bibdiv}
\begin{biblist}
\bib{1953.Babcock}{article}{
    author={Babcock, Wallace C.},
    title={Intermodulation Interference in Radio Systems},
    date={1953},
    journal = {Bell System Technical Journal},
    volume = {32},
    number = {1},
    pages = {63-73},
    doi = {10.1002/j.1538-7305.1953.tb01422.x},
    review={},
    url={https://onlinelibrary.wiley.com/doi/abs/10.1002/j.1538-7305.1953.tb01422.x},
}
\bib{BoseChowla}{article}{
   author={Bose, R. C.},
   author={Chowla, S.},
   title={Theorems in the additive theory of numbers},
   journal={Comment. Math. Helv.},
   volume={37},
   date={1962/63},
   pages={141--147},
   issn={0010-2571},
   review={\MR{0144877}},
   doi={10.1007/BF02566968},
 }
 \bib{Cilleruelo}{article}{
   author={Cilleruelo, Javier},
   title={A greedy algorithm for $B_h[g]$ sequences},
   journal={J. Combin. Theory Ser. A},
   volume={150},
   date={2017},
   pages={323--327},
   issn={0097-3165},
   review={\MR{3645579}},
   doi={10.1016/j.jcta.2017.03.010},
 }
\bib{1941.Erdos&Turan}{article}{
    author={Erd\"{o}s, P.},
    author={Tur\`{a}n, P.},
    title={On a problem of Sidon in additive number theory, and on some related problems},
    date={1941},
    journal={J. London Math. Soc.},
    volume={16},
    pages={212--215},
}
\bib{1972.Gardner}{article}{
    title = {Mathematical Games},
    author = {Gardner, Martin},
    journal = {Scientific American},
    volume = {226},
    number = {3},
    date = {March 1972},
    pages = {108--113},
    url = {https://www.jstor.org/stable/10.2307/24927298},
}

\bib{2001.Green}{article}{
   author={Green, Ben},
   title={The number of squares and $B_h[g]$ sets},
   journal={Acta Arith.},
   volume={100},
   date={2001},
   number={4},
   pages={365--390},
   issn={0065-1036},
   review={\MR{1862059}},
   doi={10.4064/aa100-4-6},
}
 \bib{Senia}{article}{
     author={Hinman, J.},
     author={Kuca, B.},
     author={Schlesinger, A.},
     author={Sheydvasser, A.},
     title={The unreasonable rigidity of Ulam sequences},
     journal={J. Number Theory},
     volume={194},
     date={2019},
     pages={409--425},
     issn={0022-314X},
     review={\MR{3860484}},
     doi={10.1016/j.jnt.2018.06.015},
 }
 \bib{1944.Mian&Chowla}{article}{
    author={Mian, Abdul Majid},
    author={Chowla, S.},
    title={On the $B_2$ sequences of Sidon},
    journal={Proc. Nat. Acad. Sci. India Sect. A},
    volume={14},
    date={1944},
    pages={3--4},
    issn={0369-8203},
    review={\MR{0014114}},
}
 \bib{2023.Nathanson}{article}{
   author={Nathanson, Melvyn B.},
   title={The third positive element in the greedy $B_h$-set},
   pages={5 pages},
   date={2023-10-22},
   doi={10.48550/arXiv.2310.14426},
 }
 \bib{2023.Nathanson&OBryant}{article}{
   author={Nathanson, Melvyn B.},
   author={O'Bryant, Kevin},
   title={The fourth positive element in the greedy $B_h$-set},
   pages={7 pages},
   date={2023-11-23},
   doi={10.48550/arXiv.2311.14021},
 }
\bib{2005.OBryant}{article}{
    author={O'Bryant, Kevin},
    title={A complete annotated bibliography of work related to Sidon
        sequences},
    journal={Electron. J. Combin.},
    volume={DS11},
    date={2004},
    pages={39},
    review={\MR{4336213}},
}
\bib{2023.OBryant}{article}{
    author={O'Bryant, Kevin},
    title={Constructing thick $B_h$-sets},
    journal={J. Integer Seq.},
    volume={27},
    date={2024},
    number={1},
    pages={Paper No. 24.1.2, 17},
    review={\MR{4694191}},
}
\bib{OEIS:mian-chowla}{webpage}{
    author={OEIS Foundation Inc},
    title={Entry A005282 in The On-Line Encyclopedia of Integer Sequences},
    date={2023-12-14},
    url={https://oeis.org/A005282},
}
\bib{1932.Sidon}{article}{
    author={Sidon, S.},
    title={Ein Satz \"{u}ber trigonomietrische Polynome und seine Anwendungen in der Theorie der Fourier-Reihen},
    date={1932},
    journal={Math. Annalen},
    volume={106},
    pages={536--539},
    doi={\href{https://link.springer.com/article/10.1007/BF01455900}{10.1007/BF01455900}},
}

\bib{1938.Singer}{article}{
    author={Singer, James},
    title={A theorem in finite projective geometry and some applications to number theory},
    date={1938},
    ISSN={0002-9947},
    journal={Trans. Amer. Math. Soc.},
    volume={43},
    number={3},
    pages={377\ndash 385},
    review={\MR{1501951}},
    url={https://www.ams.org/journals/tran/1938-043-03/S0002-9947-1938-1501951-4/S0002-9947-1938-1501951-4.pdf},
}
\bib{1955.Stohr}{article}{
    author={St\"{o}hr, Alfred},
    title={Gel\"oste und ungel\"oste Fragen \"uber Basen der nat\"urlichen Zahlenreihe. I, II},
    date={1955},
    journal={J. Reine Angew. Math.},
    volume={194},
    pages={40--65, 111--140},
}

 \bib{Voutier}{article}{
     author={Voutier, Paul},
     title={Personal Communication},
     date={2023-12-04},
    }
\bib{1993.Zhang}{article}{
    author={Zhang, Zhen Xiang},
    title={A $B_2$-sequence with larger reciprocal sum},
    journal={Math. Comp.},
    volume={60},
    date={1993},
    number={202},
    pages={835\ndash 839},
    issn={0025-5718},
    url={http://links.jstor.org/sici?sici=0025-5718\%28199304\%2960\%3A202\%3C835\%3AAWLRS\%3E2.0.CO\%3B2-S},
    review={\MR{93m:11012}},
}
\end{biblist}
\end{bibdiv}

\appendix

\section{Formulas for \texorpdfstring{$\mathbf{\gamma_2},\mathbf{\gamma_3},\mathbf{\gamma_4}$}{gamma sub 2, gamma sub 3, gamma sub 4, gamma sub 5}}
\label{sec:appendix}

\subsection{A Formula for \texorpdfstring{$\mathbf{\gamma_2}$}{gamma sub 2}}
We have $\gamma_0=0$ and $\gamma_1=1$, and we wish to find $\gamma_2$. By Lemma~\ref{lem:mex},
\begin{multline*}
    \gamma_{2} = \mex\bigg\{ \frac1r (m_1-m'_1) : 1\le r\le h,\quad m_1,m'_1\in\NN,\\
    m_1\le h,\quad m'_1\le h-r \bigg\}
    = \mex\{ [ -(h-1),h]\} = h+1.
\end{multline*}

\subsection{A Formula for \texorpdfstring{$\mathbf{\gamma_3}$}{gamma sub 3}}
By Lemma~\ref{lem:mex},
\begin{multline*}
    \gamma_{3} = \mex\bigg\{ \frac1r\big((m_1-m'_1) + (h+1) (m_2-m'_2) \big) : 1\le r \le h,\quad  m_1,m'_1,m_2,m'_2\in\NN \\
    m_1+m_2\le h,\quad m'_1+m'_2\le h-r, \quad m_1m'_1= m_2m'_2=0 \bigg\}.
\end{multline*}
If $m'_2>0$, then $m_2=0$ and $m_1\le h$, and so
\[ \frac1r\big((m_1-m'_1) + (h+1) (m_2-m'_2)\big)=\frac 1r\bigg((m_1-m'_1)- (h+1) m'_2\bigg)
\le \frac1r \bigg(h-(h+1) m'_2\bigg)<0.\]
Thus we can assume that $m'_2=0$ and $m_1\le h-m_2$. We now have
\begin{equation*}
    \gamma_{3} = \mex \left\{ \frac1r\bigg( (m_1-m'_1) +(h+1) m_2\bigg): m_1+m_2\le h, m'_1\le h-r, m_1m'_1=0 \right\}.
\end{equation*}
With $r=1$, we have $-(h-1)\le m_1-m'_1 \le h-m_2$ and so
\begin{multline}\label{eq:k=2}
    \big\{ (m_1-m'_1) +(h+1)m_2 : m_1+m_2\le h,\quad m'_1\le h-r, \quad m_1m'_1=0 \big\} \\
    =\bigcup_{m_2=0}^h \bigg((h+1)m_2 + [-(h-1),h-m_2]\bigg).
\end{multline}
But since the right endpoint of the interval $(h+1)m_2 + [-(h-1),h-m_2]$ is at least as large as 1 less than the left endpoint of the interval $(h+1) (m_2+1) + [-(h-1),h-(m_2+1)]$, the union in~\eqref{eq:k=2} is the interval
\( [1-h, h(h+1)] \).

If $r\ge 2$, then
\[\frac 1r \big(m_1-m'_1 + (h+1)(m_2+m'_2)\big) \le \frac12\big(h(h+1)\big).\]
Thus,
\(\gamma_3 = \mex [0,h(h+1)] = h(h+1)+1 = h^2+h+1.\)

\subsection{A Formula for \texorpdfstring{$\mathbf{\gamma_4}$}{gamma sub 4}}
The formula for $\gamma_4$, proved in~\cite{2023.Nathanson&OBryant},is more involved, and the proof requires substantially better organization. The argument given here is not substantively different from that of~\cite{2023.Nathanson&OBryant}, although the exposition is quite different.

The ``Iverson Bracket'' $\tf{P}$ is $1$ if $P$ is true, and is $0$ if $P$ is false. Set
\[\Delta_r \coloneqq  \left\{\langle d_1,d_2,d_3\rangle : d_i\in\ZZ, \quad \sum_{i=1}^3 d_i \tf{d_i>0}\le h,\quad \sum_{i=1}^3 (-d_i) \tf{d_i<0}\le h-r\right\}.\]
Then $\gamma_4$ is smallest nonnegative integer that is not equal to
\[\frac1r\, \delta \cdot \p{\gamma_1,\gamma_2,\gamma_3}\]
for any $\delta\in \Delta_r$ for any $r\in[1,h]$.

Define
\begin{multline*}
    \Delta\coloneqq  \bigg\{\p{m_1-m'_1,m_2-m'_2,m_3} : m_i,m'_i \in \NN,\quad \sum_{i=1}^3 m_i\le h, \quad \sum_{i=1}^2 m'_i \le h-1, \\
    m_1m'_1=m_2m'_2=m_3m'_3=0,\quad
    m'_2+m_3\le h,\quad m_3 \le \frac{h+1}{2}
    \bigg\},
\end{multline*}
which is clearly a subset of $\Delta_1$. The cleverness in this proof, if there is any, is in the choice of the conditions $m'_2+m_3\le h, m_3\le \frac{h+1}{2}$, which we don't explain but are used below.

The values for $m_3$ that appear in $\Delta$ are $[0,\floor{(h+1)/2}]$. For each value of $m_3$, the quantity $m_2-m'_2$ varies from $-(h-m_3)$ up to $h-m_3$. For each pair of values $m_2-m'_2,m_3$, the quantity $m_1-m'_1$ varies through an interval (which depends on whether $m_2-m'_2$ is nonnegative or negative).

We order the elements of $\Delta$ as follows. We define $\delta=\p{d_1,d_2,d_3} <\delta'=\p{d'_1,d'_2,d'_3}$ if there is a $j$ with $d_{3-i}=d'_{3-i}$ for $0\le i < j$ and $d_{3-j}<d'_{3-j}$. In this ordering, we define the \emph{odometer} of $\Delta$ to be the numbering $\delta_0\coloneqq \p{ 0,0,0}<\delta_1 < \dots$ of all elements of $\Delta$ that are at least $\delta_0$.
Let $\vec{\gamma}\coloneqq \p{\gamma_1,\gamma_2,\gamma_3}=\p{1,h+1,h^2+h+1}$. The sequence $\delta_0\cdot \vec\gamma, \delta_1\cdot \vec\gamma,\delta_2\cdot \vec\gamma,\dots$ tends to increase, but need not do so monotonically. We call $\delta \cdot \vec\gamma$ the \emph{image of $\delta$}.

In $\Delta$, the last component $d_3$ is never negative. Consecutive elements of the odometer have either first coordinate going up by $1$, second coordinate going up by $1$, or third coordinate going up by $1$ (just like a car odometer, which always has some digit going up by 1, while less significant digits drop to their minimum possible values).
Consecutive elements of the odometer have one of four shapes:
\begin{enumerate}[noitemsep,label=(\emph{\roman*})]
    \item $\p{d_1,d_2,d_3} < \p{d_1+1,d_2,d_3}$;
    \label{case:i}
    \item $d_2<0$ and $\p{d_1,d_2,d_3}=\p{h-m_3,-m'_2,m_3}<\p{-h+m'_2,-m'_2+1,m_3}$;
    \label{case:ii}
    \item $d_2\ge 0$ and $\p{d_1,d_2,d_3}=\p{h-m_2-m_3,m_2,m_3}<\p{-(h-1) ,m_2+1,m_3}$;
    \label{case:iii}
    \item $\p{d_1,d_2,d_3}=\p{0,h-m_3,m_3}<\p{-(h-m_3-1),-m_3,m_3+1}$.
    \label{case:iv}
\end{enumerate}
The restriction $m_3\le (h+1)/2$ in $\Delta$ prevents a fifth shape.

In case~\ref{case:i}, the image goes up by $1$.

In case~\ref{case:ii}, the image increases by
\begin{equation*}
    \big(\p{-h+m'_2,-m'_2+1,m_3}-\p{h-m_3,-m'_2,m_3}\big)\cdot\p{1,h+1,\gamma_3}
    =-h+m'_2+m_3+1 \le 1,
\end{equation*}
where we have used the condition in the definition of $\Delta$ that $m'_2+m_3\le h$.
While the ``increase'' could be negative (or zero), that is not a concern.

Case~\ref{case:iii} can only arise if $\p{-(h-1) ,m_2+1,m_3}$ is in $\Delta$, so that we have $m_3+m_2+1\le h$. The image increases by
\begin{equation*}
    \big( \p{-(h-1) ,m_2+1,m_3}-\p{h-m_2-m_3,m_2,m_3}\big)\cdot\p{1,h+1,\gamma_3 }
    = -h+2+m_2+m_3 \le 1.
\end{equation*}

Case~\ref{case:iv} only arises if $\p{-(h-m_3-1),-m_3,m_3+1}$ is in $\Delta$, so that we may assume that $m_3+1\le h$. Also, the condition $m_3\le (h+1)/2$ in $\Delta$ insures that $\p{-(h-m_3-1),-m_3,m_3+1}$ is in $\Delta$.
The image increases by
\begin{multline*}
    \big(\p{-(h-m_3-1),-m_3,m_3+1}-\p{0,h-m_3,m_3}\big)\cdot\p{1,h+1,h^2+h+1} \\
    = m_3+1-h+1\le 1.
\end{multline*}

Thus, as we move through the odometer, the images start at $0$ and increase by at most $1$. As $\Delta$ contains $\p{0,\floor{\frac h2},\floor{\frac{h+1}2}}$, which has image $\floor{\frac h2} (h+1) +\floor{\frac{h+1}2}(h^2+h+1)$, we have shown that the image of $\Delta$ contains the interval $[0, \floor{\frac h2} (h+1) +\floor{\frac{h+1}2}(h^2+h+1)]$. And since $\Delta\subseteq \Delta_1$, we have shown that the image of $\Delta_1$ contains every natural number in the interval
\begin{equation}\label{eq:gamma4 part1}
    \left[0, \dfloor{\frac h2} (h+1) +\dfloor{\frac{h+1}2}(h^2+h+1)\right].
\end{equation}

Moreover, $\Delta_1$ also contains
\[\left\{\p{d_1,-\dfloor{\frac{h+1}2},\dfloor{\frac{h+3}2} } : 1-h+\dfloor{\frac{h+1}2} \le d_1 \le h-\dfloor{\frac{h+3}2}\right\}\]
which has image
\begin{equation}\label{eq:gamma4 part2}
    \left[ \left(h^2+h+1\right) \dfloor{\frac{h+3}{2}} -h \dfloor{\frac{h+1}{2}} -h+1 , h^2 \dfloor{ \frac{h+3}{2}} +\dfloor{\frac{3h}{2}} \right].
\end{equation}

The intervals in~\eqref{eq:gamma4 part1} and~\eqref{eq:gamma4 part2} contain every natural number up to and including $h^2\floor{(h+3)/2}+\floor{3h/2}$, which proves that $\gamma_4 \ge h^2\floor{(h+3)/2}+\floor{3h/2}+1$.

Now suppose that $h^2\floor{(h+3)/2}+\floor{3h/2}+1$ is the image of $\p{d_1,d_2,d_3} \in \Delta_1$:
\begin{equation}\label{eq:target gamma4}
    d_1+d_2(h+1)+d_3(h^2+h+1) = h^2\floor{(h+3)/2}+\floor{3h/2}+1.
\end{equation}
If $d_3\le (h+1)/2$, then
\begin{multline*}
    d_1+d_2(h+1)+d_3(h^2+h+1)\le (h-\floor{\tfrac{h+1}{2}})(h+1)+\floor{\tfrac{h+1}2}(h^2+h+1)
    \\ <  h^2\floor{(h+3)/2}+\floor{3h/2}+1.
\end{multline*}
If $d_3 \ge (h+4)/2$, then
\begin{multline*}
    d_1+d_2(h+1)+d_3(h^2+h+1)\ge (1-h)(h+1)+\ceiling{\tfrac{h+4}2}(h^2+h+1)
    \\ >  h^2\floor{(h+3)/2}+\floor{3h/2}+1.
\end{multline*}
Thus, $\frac{h+2}2\le d_3 \le \frac{h+3}2$, i.e., $d_3=\floor{(h+3)/2}$.
Equation~\eqref{eq:target gamma4} becomes
\[d_2 (h+1)+d_1+(h+1) \dfloor{\frac{h+1}{2}} =\dfloor{\frac{h}{2}}.\]
Reducing this modulo $h+1$ reveals that $d_1 = \floor{h/2}$ or $d_1=\floor{h/2}-(h+1)$, but in the first case $d_1+d_3=\floor{h/2}+\floor{(h+3)/2}>h$, and so $\p{d_1,d_2,d_3}$ is not in $\Delta_1$. In the second case, we find that $d_2=1-\floor{(h+1)/2}$ and so $(-d_1)+(-d_2)=(h+1)-\floor{h/2}+\floor{(h+1)/2}-1>h-1$, and again $\p{d_1,d_2,d_3} \not \in \Delta_1$. Thus no element of $\Delta_1$ has $h^2\floor{(h+3)/2}+\floor{3h/2}+1$ has an image.

The largest image of an element of $\Delta_r$, for $r\ge2$, is
\[\frac1r \p{0,0,h} \cdot \p{\gamma_1,\gamma_2,\gamma_3} \leq \frac12 h\cdot (h^2+h+1)
\le h^2\floor{(h+3)/2}+\floor{3h/2}.\]
Thus, $ h^2\floor{(h+3)/2}+\floor{3h/2}+1$ is not $\frac1r \delta\cdot\p{\gamma_1,\gamma_2,\gamma_3}$ for any $r$ and any $\delta\in \Delta_r$.
Thus,
\(\gamma_4 =  h^2\floor{(h+3)/2}+\floor{3h/2}+1.\)

\section{The Details in the Proof that \texorpdfstring{$\gamma_5 \ge \frac18 h^4+\frac12 h^3 $}{gamma sub 5 is at least one-eighth h to the 4 plus one half h cubed}}
\label{appendix:B}
We now handle the four possibilities one at a time.
\begin{enumerate}[noitemsep, label=(\emph{\roman*})]
    \item $d_1< u_1(d_4,d_3,d_2)$, $\nu=\p{d_4,d_3,d_2,d_1+1}$.
\end{enumerate}
We have
\( (\nu-\delta)\cdot\vg =\gamma_1 =1,\)
This ends the easiest case.

\begin{enumerate}[noitemsep, label=(\emph{\roman*}),resume*]
    \item
    $d_1 = u_1(d_4,d_3,d_2), d_2< u_2(d_4,d_3)$,\\ $\nu=\p{\ell_1(d_4,d_3,d_2+1),d_2+1,d_3,d_4}$;
\end{enumerate}
We first note that since $\delta\in\Delta$, we know that $d_2\ge \ell_2(d_4,d_3)$, and since \(\nu \in \Delta,\) we know that $d_2+1 \le u_2(d_4,d_3)$. Combined, we have
\begin{equation}\label{eq:case(ii)}
    d_4+|d_3|-h\le d_2 \le h-d_4-|d_3| - 1.
\end{equation}
We have
\begin{align*}
    (\nu-\delta)\cdot\vg
    &= \big(\ell_1(d_4,d_3,d_2+1)-u_1(d_4,d_3,d_2)\big)\gamma_1+\gamma_2 \\
    &= \ell_1(d_4,d_3,d_2+1) - u_1(d_4,d_3,d_2) + h + 1.
\end{align*}
As $d_4\ge 1$, we have just $4$ sub-cases to consider based on the sign of $d_2$ and $d_3$.
\begin{center}
    \begin{tabular}{cc|c|c}
        $d_3$    &    $d_2$    & $\ell_1(d_4,d_3,d_2+1)$ & $u_1(d_4,d_3,d_2)$ \\ \hline
        negative   &  negative   &    $1-h-d_3-(d_2+1)$    &      $h-d_4$       \\
        negative   & nonnegative &        $1-h-d_3$        &    $h-d_4-d_2$     \\
        nonnegative &  negative   &      $1-h-(d_2+1)$      &    $h-d_4-d_3$     \\
        nonnegative & nonnegative &          $1-h$          &  $h-d_4-d_3-d_2$
    \end{tabular}
\end{center}
\begin{center}
    \begin{tabular}{cc|c}
        $d_3$    &    $d_2$    & $\ell_1(d_4,d_3,d_2+1) - u_1(d_4,d_3,d_2)+h+1$ \\ \hline
        negative   &  negative   &            $1 - h +d_4 - d_3 - d_2$           \\
        negative   & nonnegative &            $2 - h +d_4 - d_3 + d_2$          \\
        nonnegative &  negative   &            $1 - h +d_4 + d_3 - d_2 $         \\
        nonnegative & nonnegative &            $2 - h +d_4 + d_3 + d_2$
    \end{tabular}
\end{center}
If $d_2<0$, then, we have
\[\ell_1(d_4,d_3,d_2+1) - u_1(d_4,d_3,d_2) + h + 1\le 1-h+d_4+|d_3|-d_2.\]
From line~\eqref{eq:case(ii)}, we have $-d_2\le h-d_4-|d_3|$ and so
\[1-h+d_4+|d_3| - d_2 \le 1-h+d_4+|d_3| +h-d_4-|d_3|=1.\]

Similarly, if $d_2\ge 0$ we have
\begin{multline*}
    \ell_1(d_4,d_3,d_2+1) - u_1(d_4,d_3,d_2) + h + 1
    \le 2-h+d_4+|d_3|+d_2\\
    \le 2-h+d_4+|d_3|+(h-1-d_4-|d_3|) = 1.
\end{multline*}
Thus, in all $4$ sub-cases the image increments by at most $1$, and case~\ref{case (ii)} is ended.

\begin{enumerate}[noitemsep, label=(\emph{\roman*}),resume*]
    \item $d_1=u_1(d_4,d_3,d_2),d_2=u_2(d_4,d_3), d_3 < u_3(d_4)$,\\
    $\nu = \p{\ell_1(d_4,d_3+1,\ell_2(d_4,d_3+1)),\ell_2(d_4,d_3+1),d_3+1,d_4}$;
\end{enumerate}
We have
\begin{align*}
    \delta &=\p{u_1(d_4, d_3, d_2), h-d_4-|d_3|, d_3, d_4} \\
    \nu &=\p{\ell_1(d_4, d_3+1, d_4+|d_3+1|-h), d_4+|d_3+1|-h, d_3+1,d_4}\\
    \nu-\delta &=\p{\ell_1(d_4,d_3+1,d_4+|d_3+1|-h)-u_1(d_4,d_3,d_2),2d_4+2\left|d_3+\tfrac 12\right|-2h,1,0}
\end{align*}
The image increments by
\begin{equation}\label{eq case iii}
    \ell_1(d_4,d_3+1,d_4+|d_3+1|-h)-u_1(d_4,d_3,d_2)+(2d_4+2\left|d_3+\tfrac12\right|-h)(h+1)+1
\end{equation}
Moreover, from $\delta\in \Delta$ we know that $d_3 \ge \ell_3(d_4)$, and from $\nu\in\Delta$ we know that $d_3+1 \le u_3(d_4)$. Thus,
\[d_4-\dfloor{\frac{h+1}2} \le d_3 \leq \dfloor{\frac{h+1}2} -d_4-1.\]

We have two subcases depending on whether $d_3$ is negative or nonnegative. Suppose first that $d_3<0$. We have
\begin{align*}
    d_2 &= u_2(d_4,d_3) = h-d_4+d_3 > 0 \\
    |d_3+1|&= -d_3-1 \\
    u_2=d_4 + |d_3+1|-h &\le d_4+\dfloor{\frac{h+1}2}-d_4-1 - h =-\dfloor{\frac{h+2}2}< 0 \\
    \ell_1(d_4,d_3+1,u_2) &=1-h-(d_3+1)-(d_4-d_3-1-h) = 1-d_4 \\
    u_1(d_4,d_3,d_2) &= h-d_4- d_2 = h-d_4-(h-d_4+d_3) = -d_3.
\end{align*}
The image increments, substituting into~\eqref{eq case iii}, by
\[2-d_4+d_3+(2d_4 + 2\left|d_3+\tfrac12\right|-h)(h+1).\]
Since $d_3 \le -1$, we have $2|d_3+1/2| = -2d_3-1$, giving an increment of
\begin{align*}
    2-d_4+d_3+&(2d_4 -2d_3-1-h)(h+1)\\
    &\le 2-d_4+\dfloor{\frac{h+1}2} - d_4 - 1 + \left(2d_4 -2d_4-2\dfloor{\frac{h+1}2} -1 - h \right)(h+1) \\
    &\le 1+\dfloor{\frac{h+1}2} -2 - \left(2\dfloor{\frac{h+1}2}+1+h\right)(h+1) \\
    &\le \frac h2 -(2h+1)(h+1),
\end{align*}
which is negative, for all $h\ge0$.

Now suppose that $d_3\ge0$.We have
\begin{align*}
    d_2 &= u_2(d_4,d_3) = h-d_4-d_3 >0 \\
    |d_3+1|&= d_3+1 \leq \dfloor{\frac{h+1}2} - d_4  \\
    u_2=d_4 + |d_3+1|-h &\le \dfloor{\frac{h+1}2} - h < 0\\
    \ell_1(d_4,d_3+1,u_2) &=1-h -u_2 = -d_4-d_3 \\
    u_1(d_4,d_3,d_2) &= h-d_4 - d_3 - d_2.
\end{align*}
The image increments, substituting into~\eqref{eq case iii}, by
\begin{align*}
    1-d_4-d_3 -&(h-d_4-d_3-d_2)+(2d_4+2d_3+1-h)(h+1)+1\\
    &= 2 - h +d_2 +(2d_4+2d_3+1-h)(h+1)\\
    &\le 2-h+ h-d_4-d_3 + \bigg(2d_4+2(\dfloor{\tfrac{h+1}2}-d_4-1)+1-h\bigg)(h+1)\\
    &= 2-d_4-d_3+\bigg(2\dfloor{\frac{h+1}2}-1-h\bigg)(h+1) \\
    &\le 1+0 \cdot (h+1) \le 1.
\end{align*}
Thus, in both sub-cases, the increment is at most $1$, and this ends case~\ref{case (iii)}.

\begin{enumerate}[noitemsep, label=(\emph{\roman*}),resume*]
    \item    $d_1=u_1(d_4,d_3,d_2),d_2=u_2(d_4,d_3), d_3 = u_3(d_4),d_4<u_4$,\\
    $\nu = \p{ \ell_1(d_4+1,\ell_3(d_4+1), L), L, \ell_3(d_4+1), d_4+1 } $,
    where $L \coloneqq \ell_2(d_4+1,\ell_3(d_4+1))$.
\end{enumerate}

As $d_4<u_4$, we have $d_4 \le \dfloor{\frac{h-1}4}$.
Also, $d_3=u_3(d_4) = \dfloor{\frac{h+1}2}-d_4$,
\(d_2 = h-\left\lfloor \frac{h+1}{2}\right\rfloor\)
and $d_1=0$.

Thus,
\begin{align*}
    \nu &= \p{-d_4,-\left\lfloor \frac{h}{2}\right\rfloor ,d_4-\left\lfloor \frac{h-1}{2}\right\rfloor ,d_4+1} \\
    \delta&= \p{0,\left\lfloor \frac{h}{2}\right\rfloor ,\left\lfloor \frac{h+1}{2}\right\rfloor -d_4,d_4}\\
    \nu-\delta &= \p{-d_4,-2 \left\lfloor \frac{h}{2}\right\rfloor ,2 d_4-2 \left\lfloor \frac{h+1}{2}\right\rfloor +1,1}.
\end{align*}
The increment is then
\[-d_4 -2(h+1)\dfloor{\frac h2} +\left(2 d_4+1-2 \left\lfloor \frac{h+1}{2}\right\rfloor \right)(h^2+h+1) +\dfloor{\frac{h+3}{2}} h^2   + \dfloor{\frac{3h+2}2},\]
which is more profitably written as
\[d_4\left( 2h^2+2h+1 \right) - 2(h+1)\dfloor{\frac h2} +\left(1-2 \left\lfloor \frac{h+1}{2}\right\rfloor \right)(h^2+h+1) +\dfloor{\frac{h+3}{2}} h^2   + \dfloor{\frac{3h+2}2}.\]
We can now use $d_4 \le \dfloor{\frac{h-1}4}$. Splitting into $4$ sub-cases depending the residue of $h$ modulo $4$, we have the routine problem (for each sub-case) of solving a \emph{quadratic} inequality.

\end{document}